\documentclass[11pt,a4paper]{article}

\usepackage{booktabs}
\usepackage{amsfonts}
\usepackage{amsmath,amsthm,amsopn}
\usepackage{graphicx}
\usepackage{algorithmic}
\usepackage[ruled]{algorithm2e}
\SetAlFnt{\small}
\SetAlCapFnt{\small}
\SetAlCapNameFnt{\small}
\SetAlCapHSkip{0pt}
\IncMargin{-\parindent}
\usepackage{tikz,pgfplots}
\usetikzlibrary{math}
\usepackage{xspace}
\usepackage{pdfcomment}
\usepackage[left=2.5cm,right=2.5cm,top=3cm,bottom=3cm]{geometry}
\usepackage{cleveref}
\Crefname{ALC@unique}{Line}{Lines}

\newcounter{myalg}
\AtBeginEnvironment{algorithmic}{\refstepcounter{myalg}}
\makeatletter
\@addtoreset{ALC@unique}{myalg}
\makeatother

\newenvironment{acknowledgements}{\paragraph{Acknowledgements}}{}
\usepackage{amsopn}

\newtheorem{theorem}{Theorem}

\newtheorem{definition}{Definition}
\newtheorem{lemma}{Lemma}

\newcommand{\mybar}[1]{\makebox[0pt]{$\phantom{#1}\overline{\phantom{#1}}$}#1}

\newcommand{\yambo}{Yambo\xspace}

\def\ub{\alpha}
\def\lb{\beta}

\def\cri{\textsf{CrI$_3$}\xspace}
\def\dna{\textsf{DNA}\xspace}

\def\MX{M_1}
\def\MY{M_2}
\def\oMX{\overline{M}_1}
\def\oMY{\overline{M}_2}
\def\QX{Q_1}
\def\QY{Q_2}
\def\oQX{\overline{Q}_1}
\def\oQY{\overline{Q}_2}
\def\WX{W_1}
\def\WY{W_2}
\def\oWY{\overline{W}_2}
\def\oWX{\overline{W}_1}
\def\XX{X_1}
\def\XY{X_2}
\def\oXX{\overline{X}_1}
\def\oXY{\overline{X}_2}

\def\RX{\mathcal{R}_1}
\def\RY{\mathcal{R}_2}
\def\oRX{\overline{\mathcal{R}}_1}
\def\oRY{\overline{\mathcal{R}}_2}

\def\VX{V_1}
\def\VY{V_2}
\def\oVX{\overline{V}_1}
\def\oVY{\overline{V}_2}
\def\LX{L_1}
\def\LY{L_2}
\def\oLX{\overline{L}_1}
\def\oLY{\overline{L}_2}
\def\NX{N_1}
\def\NY{N_2}
\def\oNX{\overline{N}_1}
\def\oNY{\overline{N}_2}
\def\SX{S_1}
\def\SY{S_2}
\def\oSX{\overline{S}_1}
\def\oSY{\overline{S}_2}

\def\SignGen{\Omega}
\def\SignIn{\Omega}
\def\SignOut{\tilde{\Omega}}
\def\SignBSE{\Sigma_m}
\def\SignOutBSE{\Sigma_k}
\def\sizeL{l}
\def\SignOutBSEL{\Sigma_{\sizeL}}

\def\SVDVal{\Upsilon}
\def\SVDV{V}
\def\SVDU{U}

\newcommand{\YnX}[1]{Y_1^{(#1)}}
\newcommand{\YnY}[1]{Y_2^{(#1)}}
\def\YYX{Y_1^{(n)}}
\def\YYY{Y_2^{(n)}}
\def\YYYaux{\check{Y}_2^{(n)}}
\def\oYYYaux{\overline{\check{Y}}_2^{(n)}}
\def\YZX{Y_1^{(0)}}
\def\YZY{Y_2^{(0)}}

\def\YVX{Y_1^{(n+1)}}
\def\YVY{Y_2^{(n+1)}}
\def\YWX{Y_1^{(n+2)}}
\def\YWY{Y_2^{(n+2)}}
\newcommand{\oYnX}[1]{\overline{Y}_1^{(#1)}}
\newcommand{\oYnY}[1]{\overline{Y}_2^{(#1)}}

\def\oYVX{\overline{Y}_1^{(n+1)}}
\def\oYVY{\overline{Y}_2^{(n+1)}}
\def\oYWX{\overline{Y}_1^{(n+2)}}
\def\oYWY{\overline{Y}_2^{(n+2)}}

\def\AX{\hat{X}}
\def\AQ{\hat{Q}}

\def\XX{X_1}
\def\XY{X_2}
\def\HX{\hat{X}}
\def\HXX{\hat{X}_1}
\def\HXY{\hat{X}_2}
\def\oHXX{\overline{\hat{X}}_1}
\def\oHXY{\overline{\hat{X}}_2}
\def\XXi{X_1^{(0)}}
\def\XYi{X_2^{(0)}}
\def\oXXi{\overline{X}_1^{(0)}}
\def\oXYi{\overline{X}_2^{(0)}}
\def\pHX{\tilde{X}}
\def\pHXX{\tilde{X}_1}
\def\pHXY{\tilde{X}_2}
\def\opHXX{\overline{\tilde{X}}_1}
\def\opHXY{\overline{\tilde{X}}_2}
\def\HQ{\hat{Q}}
\def\HQX{\hat{Q}_1}
\def\HQY{\hat{Q}_2}
\def\oHQX{\overline{\hat{Q}}_1}
\def\oHQY{\overline{\hat{Q}}_2}
\def\HQYaux{\check{Q}_2}
\def\oHQYaux{\overline{\check{Q}}_2}

\def\xallCrI{
1.8093308239718,
1.8101437976719,
1.9108518408822,
1.9126169486030,
3.1780490363699,
3.3782493063568,
3.3896047788142,
3.3901153941886,
3.3907928996599,
3.3943191630075,
3.3972046845099,
3.3973029175728,
3.3978170255284,
3.3984989112849,
3.3997640535358,
3.4050003551214,
3.8236966984244,
3.8307332888721,
3.8307377775414,
4.1507999233448,
4.1553790454539,
4.1600277964948,
4.1600484969790,
4.1649704068136,
4.1650533627705,
5.0108105344171,
6.5745247140083,
6.5745539982955,
6.5824329668307,
6.6093632560589,
6.6734415492003,
6.6743702700476,
8.5744253646747,
8.5758704814152,
8.5817615319823,
8.5818283974056,
8.5834669042292,
8.5867212544708,
8.5868024233719,
8.5916315096763,
8.5917144628525,
8.5924160222603,
8.5924249000937,
8.9223413566582,
8.9308676747290,
8.9313040688680,
8.9440906634177,
8.9441951779670,
8.9450294545535,
9.0409841143640,
9.1300301532293,
9.1507539964050,
9.1654446111867,
9.1655448787211,
9.1744820770881,
9.1750314390949,
10.5525586457303,
10.5971005765948,
10.5974982607336,
10.6017609381521,
10.6199802319903,
10.6207258634883,
10.6235508960380,
10.6364617282203,
10.6365209859263,
10.6370499811915,
10.6371106542139,
10.6373456088528,
10.6375292924500,
10.6375810697478,
10.6377334684910,
10.6435313223002,
10.6459168153978,
10.6459549596853,
10.6460021512840,
10.6460650771052,
10.6509821784042,
10.6681922246641,
10.6693966750490,
11.0052950631714,
11.1288343489389,
11.1344900966303,
11.1345768569469,
11.1429291613987,
11.1434449441485,
11.5487410816536,
11.5806045292339,
11.5806303738364,
11.6491235777964,
11.6732721044885,
11.6884766387602,
11.6885518586822,
11.6988203776589,
11.6988289186547,
11.9624368008692,
12.0411302301929,
12.0413660742009,
12.2148290388759,
12.2191093828284,
12.2191345145876
}

\def\xincCrI{
0,
0.000812973700099917,
0.1007080432103,
0.00176510772080007,
1.2654320877669,
0.2002002699869,
0.0113554724573999,
0.000510615374400114,
0.000677505471299966,
0.00352626334759965,
0.0028855215024004,
9.82330628995776E-05,
0.000514107955600363,
0.000681885756499856,
0.00126514225090002,
0.00523630158560007,
0.418696343303,
0.00703659044769989,
4.48866930025105E-06,
0.3200621458034,
0.00457912210910028,
0.00464875104090012,
2.0700484199665E-05,
0.00492190983460006,
8.29559569002214E-05,
0.8457571716466,
1.5637141795912,
2.92842872005039E-05,
0.00787896853519943,
0.0269302892282006,
0.0640782931413995,
0.000928720847300113,
1.9000550946271,
0.00144511674049852,
0.00589105056710082,
6.68654232995891E-05,
0.00163850682359978,
0.00325435024160115,
8.11689010991046E-05,
0.00482908630440093,
8.29531762001068E-05,
0.000701559407799479,
8.87783339997839E-06,
0.329916456564499,
0.00852631807080151,
0.000436394138999319,
0.0127865945497003,
0.000104514549299495,
0.000834276586500593,
0.0959546598105003,
0.0890460388652983,
0.0207238431757002,
0.014690614781701,
0.000100267534399734,
0.00893719836700058,
0.000549362006799115,
1.3775272066354,
0.044541930864499,
0.000397684138800258,
0.00426267741850062,
0.0182192938382002,
0.000745631497999,
0.00282503254969946,
0.0129108321823015,
5.92577059990163E-05,
0.000528995265201004,
6.06730223999818E-05,
0.000234954638898444,
0.000183683597200002,
5.17772978003706E-05,
0.000152398743200166,
0.0057978538092005,
0.00238549309760039,
3.81442874992644E-05,
4.7191598699925E-05,
6.2925821200821E-05,
0.00491710129900014,
0.0172100462598994,
0.00120445038490047,
0.3358983881224,
0.1235392857675,
0.00565574769139943,
8.67603166003761E-05,
0.00835230445179924,
0.000515782749801019,
0.405296137505099,
0.0318634475803012,
2.58446024989922E-05,
0.068493203960001,
0.0241485266921,
0.0152045342716995,
7.52199219995475E-05,
0.0102685189766998,
8.54099580038792E-06,
0.263607882214501,
0.0786934293236996,
0.000235844007999475,
0.173462964675,
0.00428034395249988,
2.51317591999367E-05
}

\def\xallDNA{
2.4081780613503,
2.9322648887841,
4.0213486915139,
4.1649595415673,
4.3805650311779,
4.6906262235970,
5.0468779807489,
5.6585881720011,
5.8913205790136,
6.3110106343988,
6.4730117758303,
6.5726197365799,
6.7855378677526,
7.1092082343074,
7.1566233336214,
7.1712025063763,
7.4278255755532,
7.5179840454278,
7.6016502045499,
7.9364126707962,
7.9684082064299,
8.0351268676340,
8.1456339213038,
8.3372239575289,
8.4610236405675,
8.4729078569858,
8.6203631402943,
8.6395223567454,
8.7083601831044,
8.9175176198805,
8.9730015493976,
9.0197060136468,
9.1952659652165,
9.4278902227017,
9.5246111842488,
9.6293159877464,
9.8184622567801,
9.8245280525607,
10.0620932130896,
10.1409637237163,
10.2368904942092,
10.3358471045367,
10.4722855948635,
10.4882393428124,
10.5566393881305,
10.7006448708082,
10.7241239405657,
10.7313064203900,
10.8368298559381,
10.9270960498152,
10.9381734411772,
10.9863706920757,
11.0594121896795,
11.1322403515608,
11.1659981648676,
11.1954356911576,
11.2470341910076,
11.3848721822811,
11.4267022851815,
11.4623049026606,
11.5075541731719,
11.5498051825465,
11.6216954979924,
11.7869458084520,
11.8282831354586,
11.9168247043002,
11.9384352754561,
12.0153110243669,
12.0659685612589,
12.1139829770217,
12.1606076852061,
12.2170808505215,
12.2970214604053,
12.3138888133021,
12.3436522624368,
12.3612666951092,
12.4849930409032,
12.4995214662693,
12.6602032143763,
12.7075203403849,
12.8927235040974,
12.9152231332033,
12.9526435830857,
12.9628523179737,
13.1178777634134,
13.1631798452539,
13.1743226857619,
13.2026790706030,
13.2359027662864,
13.2410678950881,
13.4323339071593,
13.5138663054393,
13.5546039445207,
13.5781147206539,
13.6523373517591,
13.6967883021372,
13.7136255876771,
13.7286925208841,
13.8454125007302,
13.8568510813213
}

\def\xincDNA{
0,
0.524086827433827,
1.089083802729795,
0.143610850053383,
0.215605489610604,
0.310061192419134,
0.356251757151827,
0.611710191252251,
0.232732407012487,
0.419690055385158,
0.162001141431487,
0.099607960749658,
0.212918131172693,
0.323670366554790,
0.047415099314033,
0.014579172754867,
0.256623069176928,
0.090158469874555,
0.083666159122142,
0.334762466246241,
0.031995535633741,
0.066718661204077,
0.110507053669794,
0.191590036225126,
0.123799683038643,
0.011884216418220,
0.147455283308581,
0.019159216451056,
0.068837826358989,
0.209157436776142,
0.055483929517111,
0.046704464249176,
0.175559951569662,
0.232624257485252,
0.096720961547087,
0.104704803497585,
0.189146269033733,
0.006065795780561,
0.237565160528860,
0.078870510626702,
0.095926770492925,
0.098956610327493,
0.136438490326821,
0.015953747948950,
0.068400045318041,
0.144005482677743,
0.023479069757492,
0.007182479824298,
0.105523435548035,
0.090266193877170,
0.011077391361964,
0.048197250898468,
0.073041497603867,
0.072828161881317,
0.033757813306737,
0.029437526290044,
0.051598499849961,
0.137837991273503,
0.041830102900443,
0.035602617479046,
0.045249270511366,
0.042251009374580,
0.071890315445870,
0.165250310459585,
0.041337327006637,
0.088541568841578,
0.021610571155922,
0.076875748910751,
0.050657536892066,
0.048014415762772,
0.046624708184396,
0.056473165315394,
0.079940609883820,
0.016867352896828,
0.029763449134650,
0.017614432672387,
0.123726345794058,
0.014528425366040,
0.160681748107047,
0.047317126008625,
0.185203163712419,
0.022499629105932,
0.037420449882367,
0.010208734888037,
0.155025445439675,
0.045302081840509,
0.011142840508041,
0.028356384841112,
0.033223695683363,
0.005165128801666,
0.191266012071223,
0.081532398280045,
0.040737639081387,
0.023510776133179,
0.074222631105192,
0.044450950378112,
0.016837285539866,
0.015066933207068,
0.116719979846035,
0.011438580591108
}

\title{A structure-preserving Chebyshev-filtered subspace iteration for the Bethe--Salpeter eigenvalue problem\thanks{This work was supported by grant PID2022-139568NB-I00 funded by MCIN/AEI/10.13039/501100011033 and by ERDF/EU. Innovation Study ISOLV-BSE has received funding through the Inno4scale project, which is funded by the European High-Performance Computing Joint Undertaking (JU) under Grant Agreement No 101118139. The JU receives support from the European Union's Horizon Europe Programme. The first author was also supported by Universitat Politècnica de València in its PAID-01-23 program.}
}

\author{Blanca Mellado-Pinto\thanks{D.\ Sistemes Inform\`atics i Computaci\'o, Universitat Polit\`ecnica de Val\`encia, Val\`encia, Spain
  (\texttt{bmelpin@dsic.upv.es}).}
\and Fernando Alvarruiz\thanks{D.\ Sistemes Inform\`atics i Computaci\'o, Universitat Polit\`ecnica de Val\`encia, Val\`encia, Spain
  (\texttt{fbermejo@dsic.upv.es}).}
\and Jose E. Roman\thanks{D.\ Sistemes Inform\`atics i Computaci\'o, Universitat Polit\`ecnica de Val\`encia, Val\`encia, Spain
  (\texttt{jroman@dsic.upv.es}).}
}

\begin{document}

\maketitle

\begin{abstract}
The Bethe--Salpeter equation, which has many applications in both theoretical and applied physics, is generally solved via a matrix eigenvalue problem with a rich algebraic structure. The numerical solution of such structured eigenproblem calls for specific algorithms that are able to preserve the structure throughout the computation. Several structure-preserving methods have already been proposed in the literature. In this paper, we develop a polynomial filter strategy that is able to extract approximations of eigenvalues located inside a specified interval. For this, we have devised a structure-preserving Chebyshev polynomial series, along with a specialized subspace iteration method that preserves the Bethe--Salpeter structure at every step of the algorithm. All necessary details required for a robust implementation are incorporated, and the performance is illustrated with matrices arising from real applications.
\end{abstract}

\section{Introduction}\label{sec:intro}

Optical absorption spectroscopy has long been used to obtain information about the structure and properties of elements, molecules and materials.
Nowadays physical experiments are either replaced or combined with computational experiments which obtain the optical absorption spectra \textit{ab initio} (i.e., from the known fundamental properties of matter).
The current tools available for this purpose are codes such as Yambo~\cite{Marini:2009:YAI}, BerkeleyGW~\cite{Deslippe:2012:BMP}, and others. The information that can be obtained from these simulations is valuable, for example, for improving photoelectric technology such as solar cells or optoelectronic devices.

In optical absorption, a sample is perturbed by photons, exciting the electrons into a conduction state, still in the system and interacting with the holes they leave behind. This particularity has driven the need to go beyond established frameworks like Density Functional Theory (DFT), which work well for the computation of ground states, and devise new theories such as Many Body Perturbation Theory (MBPT), that introduces the concept of pseudoparticles such as the electron-hole, also called exciton~\cite{Onida:2002:EED}. Without accounting for this, \textit{ab initio} results fail to accurately match experimental observations in some cases, notably semi-conductors and insulators.

The bound state of the exciton is computed by solving the Bethe--Salpeter equation (BSE), which involves solving an eigenvalue problem with a Hamiltonian $H$. The matrix $H$, which is typically dense, can become quite large and require significant computational effort.
For example, Gosetti \textit{et al}.~\cite{Gosetti:2024:DCE} employ a matrix of dimension 40960.
A few eigenvalues can be enough to approximate the absorption spectra, and if the number of requested eigenvalues is low, the full diagonalization of the matrix can be avoided, and an iterative solver is preferred. However, the total cost incurred by the operations with a dense matrix is also a factor that needs to be considered when selecting the method to be used.

The Bethe--Salpeter eigenvalue problem is interesting from a numerical linear algebra point of view because of the structure of the matrix, which is J-symmetric as well as pseudo-Hermitian. Although the problem can be treated simply as non-Hermitian, it is possible to take advantage of such structure to develop structure-preserving solvers with better properties regarding computational efficiency and memory requirements.

While some attention has already been given to creating such methods, in this paper we focus on the idea of polynomial filtering and how it affects the structure of the matrix.
There are a few reasons to use a polynomial filter. The first one is to be able to select a specific region of interest, possibly in the interior of the spectrum. The second is to accelerate the convergence of methods based on iteration of a subspace. A third reason is to develop spectrum slicing methods that compute different parts of the spectrum independently, having great potential for more scalable parallel computing.

In particular, we have devised a structure-preserving filter based on a Chebyshev polynomial series. The objective is to keep the matrix structure if the filter were applied explicitly to a BSE matrix. This is fundamental to introduce this technique in structure-preserving methods.

Even though the main goal of this article is to showcase the filter, which can be integrated into existing methods, we have devised a structure-preserving subspace iteration method as well. Subspace iteration methods in practice require polynomial acceleration to be effective.

This paper is structured in the following way. \Cref{sec:prelim} will introduce the problem and properties of the matrix, as well as the state of the art on methods for this problem. It will also briefly review the subspace iteration method and some polynomial filtering concepts. \Cref{sec:bsefilt} will present the proposed structure-preserving filter and its theoretical foundations. \Cref{sec:subspace-iteration} will describe an also novel structure-preserving subspace iteration method to showcase the potential of the filter. \Cref{sec:clenshaw} will describe how to apply the filter implicitly in the context of two different methods using modified versions of the Clenshaw algorithm. \Cref{sec:results} will discuss some computational results. Lastly, \Cref{sec:concl} will offer some concluding remarks.

{\textit{Notation:} For a block structured matrix, we will use $X_i$ for the block $i=1,2,\dots$ of matrix $X$. 
For a matrix $X$, $x_i$ denotes the column $i$ of the matrix, while for a vector $y$, $y_i$ denotes the element with index $i$. The superscript $X^{(i)}$ or $x^{(i)}$ is used for indexing a matrix or a vector in a sequence of similar elements obtained from previous ones, for example, the elements of a recurrence.

The superscript $\cdot^*$ denotes conjugate transposition, and the superscript $\cdot^T$ transposition.
We will use $\succ 0$ to express the positive definiteness of a matrix. 

We will denote by $\text{span}(X)$ the subspace generated by the columns of $X$, 
by $\text{diag}(X)$ a matrix whose diagonal entries are the diagonal entries of $X$ and the rest are zeros,
by $\text{diag}(d_1,\dots,d_k)$ a diagonal matrix whose diagonal entries are listed explicitly,
by $\text{sign}(X)$ the matrix sign function, and
by $|X|$ the element-wise absolute value of a matrix.

\section{Preliminaries}\label{sec:prelim}

\subsection{The Bethe--Salpeter eigenproblem}\label{sec:bse}
The goal is to compute a subset of eigenvalues $\lambda_i$ and eigenvectors $x_i$ of the eigenproblem
\begin{equation}\label{eq:eigenproblem}
Hx_i=\lambda_i x_i,
\end{equation}
where $ H \in \mathbb{C}^{2m\times 2m} $ and has the block structure
\begin{equation} \label{eq:bse-structure}
H =
 \begin{bmatrix}
      R & C \\
   -C^* & -R^T
 \end{bmatrix},
\quad
R = R^*,
\quad
C = C^T.
\end{equation}
From the application point of view, the block $R$ is the resonant term and $C$ the coupling term.
From now on and for convenience, $H$ will also be referred to as the Bethe--Salpeter equation matrix (or BSE matrix) and its block structure~\eqref{eq:bse-structure} as the BSE structure.

One can observe that $H$ is not Hermitian, hence if the eigenproblem is solved without taking into account any additional properties, a generic non-Hermitian method should be used.
As the complexity of the studied system grows, so do the computational needs, making it valuable to find alternative ways to solve the problem in a cost-efficient manner.
A common approach is to use the Tamm--Dancoff approximation, which discards the coupling term, thus reducing the problem to computing the eigenpairs of the Hermitian block $R$. However, this approximation is not always suitable, as shown, e.g., by Gr{\"u}ning \textit{et al}.~\cite{Gruning:2009:EPS}, with instances of nanomaterials where the approach fails, showcasing the need to exploit additional properties of the matrix structure to solve the full problem in a more effective way.

In particular, $H$ belongs to a class of matrices $H_\text{C}$, called J-symmetric, that satisfy $JH_\text{C}={(JH_\text{C})}^T$, where
$J=
  \left[\begin{smallmatrix}
      & I \\
    -I & 
  \end{smallmatrix}\right]
$.
Its properties are well known, for example they are described in~\cite{Bunse-Gerstner:1992:CNM,Mackey:2003:STS,Benner:2018:SRC}. The eigenvalues of a J-symmetric matrix appear in pairs $(\lambda_i,-\lambda_i)$.

$H$ also belongs to the class of pseudo-Hermitian matrices $H_{\text{PH}}$, that satisfy $\SignGen H_{\text{PH}} = {(\SignGen H_{\text{PH}})}^*$, where $\SignGen=\text{diag}\{\pm 1\}$. A subclass of pseudo-Hermitian matrices are those where $\SignGen$ is 
$\SignGen_{p,q} =
\left[\begin{smallmatrix}
I_p & \\
    & -I_q
\end{smallmatrix}\right],
$
a signature matrix with $p$ positive entries followed by $q$ negative ones.
In the case of the BSE matrix, $p=q=m$. We will denote by
$\SignBSE=
  \left[\begin{smallmatrix}
    I_m &   \\
     & -I_m
  \end{smallmatrix}\right]
$
this signature matrix, and by extension denote by $\Sigma$ and a size subindex any signature matrix with the same property but different size.
The eigenvalues of pseudo-Hermitian matrices are either real or appear in complex conjugate pairs $(\lambda_i,\overline{\lambda_i})$. 

Since a BSE matrix satisfies both properties, its eigenvalues appear as purely real or purely imaginary pairs $(\lambda_i,-\lambda_i)$ or in quadruples $(\lambda_i,-\lambda_i,\overline{\lambda_i},-\overline{\lambda_i})$. Additionally, the eigenvectors also exhibit structural properties. In this article we will not examine the properties of the eigenvectors of the general BSE matrix, focusing instead on the most common case in applications, where an additional restriction applies, as explained below.

The eigenproblem~\eqref{eq:eigenproblem} can be converted into a generalized eigenvalue problem $\SignBSE x_i=\lambda_i^{-1}\hat{H}x_i$, where
\[
 H = \SignBSE\hat{H},\quad
 \hat{H}=\begin{bmatrix}
    R & C \\
  C^* & R^T
 \end{bmatrix},
\]
and both $\SignBSE$ and $\hat{H}$ are Hermitian.
In addition, if $\hat{H} \succ 0$ all the eigenvalues are real and $H$ can be diagonalized as~\cite[Theorem 3]{Shao:2016:SPP}
\begin{equation}\label{eq:bse-diagonalization}
\begin{gathered}
HX = X\Lambda, \qquad Y^*H = \Lambda Y^*, \qquad Y^*X=I_{2m}, \\
\Lambda = 
\begin{bmatrix}
  \Lambda_+ &  \\
   & -\Lambda_+
\end{bmatrix},
\qquad
X = 
\begin{bmatrix}
  \XX & \oXY \\
  \XY & \oXX
\end{bmatrix},
\qquad
Y = 
\begin{bmatrix}
   \XX & -\oXY \\
  -\XY & \oXX
\end{bmatrix}.
\end{gathered}
\end{equation}
In this paper we always assume $\hat{H} \succ 0$. This is referred to in the literature as a definite BSE matrix $H$. This restriction forces the eigenvalues of $H$ to appear in real pairs $(\lambda_i,-\lambda_i)$.
Additionally, the right eigenvectors associated with $-\lambda_i$ and the left eigenvectors of $\pm\lambda_i$ can be obtained from the right eigenvectors associated with $\lambda_i$, according to \eqref{eq:bse-diagonalization}.

Several authors have devised structure-preserving methods for the definite BSE eigenproblem that take advantage of these properties.
An example of a technique that preserves the structure would be to project the matrix in a way that the projected matrix inherits some property, in this case the realness and the symmetry of the spectrum. This approach also leads to methods that operate only with some blocks of the data structures, handling the rest of the relations implicitly, as the structure is preserved throughout, resulting in more efficient methods from the computational point of view.

First, Gr{\"u}ning \textit{et al}.~\cite{Gruning:2011:ITL} expanded their aforementioned work, formalizing a Lanczos-based method that computes the optical spectra in a structure-preserving way, using the inner product induced by $\hat{H}$.
Shao \textit{et al}.~\cite{Shao:2016:SPP} proposed a method to diagonalize $H$ via a transformation to a real Hamiltonian, and later~\cite{Shao:2018:SPL} introduced another iterative Lanczos-based method, also showing its relationship with the Gr{\"u}ning algorithm and other methods.
In our previous work~\cite{Alvarruiz:2025:VTR}, we have extended the methods in~\cite{Shao:2018:SPL} to include a structure-preserving thick restart procedure.
Shan and Shao have recently proposed a structure-preserving LOBPCG method~\cite{Shan:2026:SPL}.

Let us take the Lanczos method described in~\cite{Shao:2018:SPL} as an example.
It requires the structure~\eqref{eq:bse-structure} of the matrix and also enforces the structure
$[u^T, \varsigma\bar{u}^T]^T$ on the basis vectors, with a sign $\varsigma=\pm 1$, which makes it possible to operate only with the upper half of the vectors, taking into account that
\begin{equation}\label{eq:H-times-u-conj-u}
H\begin{bmatrix}u \\ \varsigma\overline{u}\end{bmatrix}
= \begin{bmatrix}
  R u + \varsigma C\overline{u} \\
  -\mybar{C}u - \varsigma \mybar{R}\overline{u}
  \end{bmatrix}
= \begin{bmatrix}
  R u + \varsigma C\overline{u} \\
  -\varsigma(\varsigma\mybar{C}u + \mybar{R}\overline{u})
  \end{bmatrix}
=
\begin{bmatrix}v \\ -\varsigma\overline{v}\end{bmatrix}.
\end{equation}
Furthermore, this method also assumes positive definiteness of $\hat{H}$.

In this context, any spectral transformation to be applied should take care of preserving the same structure and properties required by the method.
The inverse of a definite BSE matrix is also a definite BSE matrix~\cite{Shao:2017:PDB}. We can also prove that the inverse of a general BSE matrix has BSE structure (\cref{sec:inverse-H}).
However, a shift-and-inverse transformation with a non-zero shift is not structure-preserving, since the matrix $(H-\sigma I)^{-1}$ loses the relationship between the top left and bottom right blocks.

The motivation for this work is to extend this idea of a structure-preserving spectral transformation to polynomial filtering, which pursues the same objective, selecting a filter that preserves the structure and properties of the matrix, and could be used in combination with any structure-preserving eigensolver.

Although we use this example (the Lanczos method described in~\cite{Shao:2018:SPL}) to illustrate the challenge of integrating a filter into structure-preserving methods, using polynomial filters for accelerating the convergence of this method has not shown any advantage, especially when dense matrices are involved, as is generally the case in applications. Conversely, subspace iteration methods like the structure-preserving method later described in this paper benefit from using polynomial filtering. That is why we have chosen to explore the idea of structure-preserving Chebyshev filtered subspace iteration, even when the filter is general and could be applied to other methods. In fact, recently Di Napoli \textit{et al.}~\cite{Napoli:2026:CAS} have also proposed a filtered subspace iteration for the BSE, using a similar oblique projection, but employing a low-pass filter applied to the folded spectrum, by applying it to $H^2$. A low-pass filter requires a lower degree polynomial and is suited for computing eigenvalues on one end of the spectrum, while the method that we propose with a structure-preserving band-pass filter requires a higher degree polynomial but can obtain the eigenvalues in any subinterval of the spectrum, which is also a first step for devising a spectrum slicing method.

\subsection{Subspace iteration}\label{sec:si}
The subspace iteration method~\cite{Bai:2000:TSA} is based on a vector simultaneous iteration combined with a Rayleigh-Ritz procedure. For numerical stability, the vectors must be mutually orthonormalized, although it is not necessary to do so every iteration, which is a common optimization.

\cref{alg:subspace-iteration} shows a simple version of the method for a matrix $A \in \mathbb{C}^{m\times m}$. If $A$ is Hermitian, normal, or can be diagonalized with a well-conditioned similarity transformation, then the eigenvectors will be computed in a numerically stable way. For the more general non-Hermitian case, a few modifications in the algorithm are required, notably step~\ref{algl:si-diagonalization} should be turned into a Schur decomposition instead of a diagonalization. Note that $A$ can be replaced by $A^{-1}$ (inverse iteration) to make the eigenvalues closest to zero dominant. Alternatively, one could apply a filter, for example a polynomial one, to accelerate the convergence of specific eigenvalues, as discussed in~\cref{sec:filt}. This can be done by premultiplying $\AX$ with a polynomial $p(A)$ before line \ref{algl:si-orthog}.

It is best to employ a subspace with a number of vectors $q$ larger than the number of desired eigenpairs $\ell$, since it has been proven that the rate of convergence for an eigenvalue $\lambda_i$, with $|\lambda_1|\ge|\lambda_2|\ge\dots\ge|\lambda_q|>|\lambda_{q+1}|\ge\dots\ge|\lambda_m|$, contained in the subspace, is governed by $\frac{|\lambda_{q+1}|}{|\lambda_{i}|}$ for $i\leq q$~\cite{Saad:2011:NML}. When choosing the optimal value for $q$ in a real implementation, a balance must be sought to improve the convergence without increasing too much the cost of the operations. A usual rule of thumb is to take $q=2\ell$.

One comment is that the convergence in~\cref{alg:subspace-iteration} is checked with the residual of $X$, since the product $AX$ for computing $\mathcal{R} = AX-X\Lambda$ is already available in $\AX$. That means that each iteration checks the convergence of the subspace computed in the previous iteration. The method we employ is to check that the relative residual, $||r_i||/|\lambda_i|$ is under a given tolerance.
In line~\ref{algl:si_convergence}, at least the first $\ell$ eigenpairs must have converged to exit.

Several acceleration techniques have been proposed for this method, like the Atkinson acceleration, or overrelaxation, but they are outside the scope of our work. The selection of the initial subspace also has an important impact on the convergence of the method, since an initial subspace close to the wanted invariant space will converge faster to it.

\begin{algorithm}[t]
  \caption{Subspace iteration}
  \label{alg:subspace-iteration}
  \begin{algorithmic}[1]
	  \REQUIRE Matrix $A \in \mathbb{C}^{m\times m}$, initial guess $X^{(0)} \in \mathbb{C}^{m\times q}$, number of wanted eigenvalues $\ell<q$
	  \ENSURE Approximate eigenpairs $(X,\Lambda)$, where $X \in \mathbb{C}^{m\times \ell}$ and $\Lambda=\text{diag}(\lambda_1,\lambda_2,\dots,\lambda_\ell) \in \mathbb{R}^{\ell\times \ell}$
    \STATE Set $\AX=X^{(0)}$
	  \WHILE{not converged}
			\STATE $Q = \text{Orthogonalize}(\AX)$ \label{algl:si-orthog}
			\STATE $\AQ = AQ$
			\STATE $M = Q^*\AQ$
			\STATE Compute eigendecomposition $MW = W\Lambda$ \label{algl:si-diagonalization}
			\STATE Sort $(W,\Lambda)$ in descending order of magnitude of the diagonal elements of $\Lambda$
			\STATE $\AX = \AQ W$
			\STATE $X = QW$
			\STATE $\mathcal{R} = \AX-X\Lambda$ 
			\STATE Check convergence of the first $\ell$ Ritz pairs in $(X,\Lambda)$ \label{algl:si_convergence}
		\ENDWHILE
		\STATE Set $X=$ first $\ell$ columns of $X$, and $\Lambda=$ leading $\ell\times\ell$ block of $\Lambda$
  \end{algorithmic}
\end{algorithm}

Even though Krylov methods are known to have a faster convergence, this method is still widely used in libraries and physics codes, especially in combination with polynomial filter acceleration, for example~\cite{Winkelmann:2019:CHS}. Actually, there are advantages of this method over Lanczos iteration, which arise from the fact that the entire subspace is updated in each step. One advantage is that it can accommodate perturbations in the matrix \cite{Saad:2016:ASI}, since the last computed subspace can be used as an initial subspace for the perturbed matrix. There is also a computational advantage, related to the concept of arithmetic intensity, which is of especial importance for GPU computation. Subspace iteration employs matrix-matrix products, a BLAS-3 operation, instead of matrix-vector products, which are BLAS-2. This classification refers to the relationship between the order of the number of data elements and the number of operations performed with that amount of data. Since reading and writing data from/to memory is much slower than performing arithmetic operations, carrying out a larger number of operations with the available data is computationally preferable.

\subsection{Polynomial filters}\label{sec:filt}

Given a Hermitian matrix $A \in \mathbb{C}^{m\times m}$ whose spectrum is contained in the interval $[\lambda_\text{min},\lambda_\text{max}]$, polynomial filtering is a technique used to compute the eigenvalues located inside a subinterval $[\alpha,\beta] \subset [\lambda_{\text{min}},\lambda_\text{max}]$, avoiding the factorization of the matrix. The eigenvalues of $p(A)$ are computed instead, where $p$ is a polynomial that approximates the indicator function
\begin{equation}\label{eq:indicator-function}
\phi_{[\alpha,\beta]}(x)=\left\lbrace \begin{array}{ll}
1, & x\in \left\lbrack \alpha ,\beta \right\rbrack, \\
0, & x\not\in \left\lbrack \alpha ,\beta \right\rbrack,
\end{array}\right.
\end{equation}
inside $[\lambda_{\min} ,\lambda_{\max}]$.
The objective is to map the eigenvalues inside the subinterval to be the dominant ones, and therefore the ones to converge first.  

In this work we consider only band-pass filters, intended to compute interior eigenvalues, but it is also possible to define low- or high-pass filters to accelerate the computation of exterior eigenvalues.
In this section, we provide a brief review of the essential theory, but this theoretical background can also be found in other articles on the same topic~\cite{Schofield:2012:SSM,Aurentz:2017:CGI,Fang:2012:FLP}.

Suppose we have a partial eigendecomposition $AX_\ell=X_\ell\Lambda_\ell$, then the polynomial function $p$ satisfies $p(A)X_\ell=X_\ell\text{diag}(p(\lambda_1),\dots,p(\lambda_\ell))$. The subspace generated by the columns  of $X_\ell$ is invariant under both $A$ and $p(A)$, so approximations of the eigenvalues and eigenvectors of $A$ can be extracted via a Rayleigh-Ritz procedure from the converged approximation of the subspace $X_\ell$ computed by subspace iteration with $p(A)$.

We will use a series of Chebyshev polynomials to build $p$. This imposes a restriction that the spectrum is contained in $[-1,1]$, as we shall see. Therefore, before proceeding, the matrix needs to be scaled for its spectrum to be contained inside this range. The scaling can be done as
\begin{equation}\label{eq:scaling}
\tilde{A} = \left(\frac{A-\sigma I}{\rho}\right),\quad
\sigma=\frac{\lambda_{\text{max}}+\lambda_{\text{min}}}{2},\quad
\rho=\frac{\lambda_{\text{max}}-\lambda_{\text{min}}}{2}.
\end{equation}
In finite precision arithmetic, and particularly if $\lambda_{\text{max}}$ is computed without too much accuracy, it may be safer to increase it by an amount $\delta>||r||_2$, where $r$ is the residual associated with $\lambda_{\text{max}}$ (and similarly for $\lambda_{\text{min}}$), to ensure that the spectrum is truly contained in $[-1,1]$ after scaling.
In the rest of this section, we use $A$ to refer to the scaled matrix $\tilde A$, with $[\alpha, \beta]$ the corresponding scaled subinterval.

Recall that Chebyshev polynomials of the first kind $T_n(\cdot)$ can be computed with a three-term recurrence relation,
\begin{gather}\label{eq:chebyshev-terms}
T_{n+1}(A) = 2AT_n(A) - T_{n-1}(A), \nonumber \\
T_0(A) = I, \\
T_1(A) = A \nonumber.
\end{gather}

It is well known from approximation theory, that a function defined in the interval $[-1,1]$ can be approximated with a finite (truncated) sum of Chebyshev polynomials. In case of a matrix function, the spectrum of the matrix must be defined in $[-1,1]$ and the series is
\begin{equation} \label{eq:chebyshev-series}
p(A) = \sum_{n=0}^{N}{c_nT_n(A)},
\end{equation}
where $N$ is the degree of the polynomial and $c_n$ are the coefficients. If the function is smooth, the approximation requires only a few terms. However, when approximating the indicator function~\eqref{eq:indicator-function}, which is discontinuous, exponential convergence is lost and also the Gibbs phenomenon appears, as discussed below.

The coefficients for the Chebyshev series approximation of function $\phi$ can be computed as
\begin{equation}\label{eq:coeffs}
c_n = \left\lbrace \begin{array}{ll}
\frac{2(\sin(n\arccos(\alpha))-\sin(n\;\arccos(\beta)))}{\pi n}, & n>0, \\
\frac{\arccos(\alpha)-\arccos(\beta)}{\pi}, & n=0.
\end{array}\right.
\end{equation}
These coefficients are determined only by the values of the endpoints of the subinterval of interest, $\alpha$ and $\beta$. We will use the notation $p_{[\alpha,\beta]}(A)$ when it is necessary to emphasize which subinterval characterizes the polynomial.

\pgfplotsset{ every non boxed x axis/.append style={x axis line style=-},
     every non boxed y axis/.append style={y axis line style=-}}
\def\lambdamax{5.865}
\def\lowerbound{1}
\def\upperbound{3}
\begin{figure}
\label{fig:filter}
\centering
\begin{tikzpicture}[scale=1]
  \begin{axis}[
  axis lines=center,
  height = 4cm, width = 8cm, scale only axis,
  ymin = -3, ymax = 7,
  xtick=\empty,
  ytick=\empty,
  xtick={0},
  xticklabels={0},
  hide obscured x ticks=false,
  ticklabel style={font=\small,fill=white},
  ]
  \addplot [smooth,gray] table [x expr=-\lambdamax+0.01*\coordindex,y={y_none},col sep=comma] {filterplotdata.txt};
  \addplot [smooth] table [x expr=-\lambdamax+0.01*\coordindex,y={y_jackson},col sep=comma] {filterplotdata.txt};
  \node [yshift=-15] at (axis cs:\lowerbound,0)  {$\ub$};
  \node [yshift=-15] at (axis cs:\upperbound,0)  {$\lb$};
  \end{axis}
\end{tikzpicture}
\caption{Representation of p(x) where p is a Chebyshev series polynomial filter of degree 200 applied to domain $[-1,1]$. The lighter line corresponds to the filter with no damping applied. The peaks near $\alpha$ and $\beta$ are the Gibbs oscillations produced when approximating a discontinuous function. The darker line represents the same polynomial filter but employing the Jackson kernel.}
\end{figure}

A representation of the filter can be observed in~\cref{fig:filter}.
This approximation is affected by Gibbs oscillations, a phenomenon resulting, in this case, from the finite approximation of a discontinuous function. The figure shows that oscillations appear near the discontinuities of $\phi$.
To eliminate these oscillations, it is enough to add some damping factors~\cite{Weisse:2006:KPM,Pieper:2016:HPI}, $g_n$, that modify the coefficients in the series, so that
\begin{equation} \label{eq:chebyshev-series-damping}
p(A) = \sum_{n=0}^{N}{g_nc_nT_n(A)}.
\end{equation}

Wei{\ss}e and coauthors~\cite{Weisse:2006:KPM} use the concept of kernel to study the truncation of the Chebyshev series to a finite series of order $N$ and the modification of the coefficients via the damping factors, which can be expressed together as a convolution of the original function to approximate and such kernel $K_N$. We are particularly interested in the study of the positivity property of the kernel, $K_N(x,y)>0, \quad\forall x,y \in [-1,1]$, which, if satisfied, ensures that approximations of positive quantities remain positive. For example, $\phi$ is never negative, therefore using a damping that satisfies a positive kernel implies that the approximate polynomial will never be negative. Intuitively, we can see that in~\cref{fig:filter} the smoothed polynomial function that approximates $\phi$ built using a Jackson kernel does not cross the x-axis to the negative space, while the Gibbs oscillations present in the polynomial approximation with no damping do.
\cref{tab:damping} presents different alternatives for the damping coefficients, with an indication of whether the kernel satisfies the positivity property.

\begin{table}%
\caption{Computation of the damping coefficients $g_n$ of different kernels and whether they satisfy the positivity property.}
\label{tab:damping}
\begin{center}
\begin{tabular}{lcc}
\hline
kernel & $g_n$ & positive \\
\hline
None & 1 & no \\
Fejer & $1-\frac{n}{N+1}$ & yes \\
Jackson & $\frac{1}{N+2}((N+2-n)\cos{\frac{\pi n}{N+2}}+\sin{\frac{\pi n}{N+2}}\cot{\frac{\pi}{N+2}})$ & yes \\
Lanczos & ${\left(\frac{\sin{(\pi n /(N+1))}}{\pi n/(N+1)}\right)}^{\gamma}$ & no \\
\hline
\end{tabular}
\end{center}
\end{table}%

The filter separates the eigenvalues inside the subinterval $[\alpha,\beta]$ from those outside by its magnitude, since eigenvalues inside the subinterval should be mapped to a value close to one and the rest to a value close to zero. In practice, eigenvalues that are close to the subinterval endpoints turn into a value ranging from zero to one. A threshold can be used to decide if an eigenvalue is inside the subinterval, computed as $p(\alpha)$ (or $p(\beta)$). The filter itself can also be scaled by dividing the coefficients by $2p(\alpha)$ \cite{Xi:2016:CPS} so that the threshold is set to 0.5. All eigenvalues of $p(A)$ above the threshold are assumed to be contained in the subinterval.

Several factors can influence the quality of the filter and therefore the convergence of the eigenvalues in the subinterval using a method like subspace iteration. A filter of a higher degree approximates the function better, and in particular a steep filter does a better job at separating the eigenvalues contained in the subinterval from the rest. However, increasing the degree is costly and some attemps have been made to reduce the degree of the filter while maintaining a good convergence rate. Pieper \textit{et al.}~\cite{Pieper:2016:HPI} provide an analysis on the optimal choice of the degree and number of search vectors, which also influences the efficiency of polynomial filtering methods. An elegant solution is to employ an adaptive filter, as in the work of Xu \textit{et al.}~\cite{Xu:2026:APF}, where the degree of the polynomial is expected to be reduced in successive iterations, governed by the estimated rate of convergence. Other proposed techniques rely on reducing the width of the area where unwanted eigenvalues are not sufficiently damped~\cite{Galgon:2017:ICP}.

When applying the filter, there are known methods to avoid the explicit evaluation of $p(A)$. In particular, when a function follows the three-term recurrence
\[T_{n+1}(A)=\omega(n,A)T_n(A)+{\varrho(n,A)T}_{n-1}(A)\]
the Clenshaw algorithm~\cite{Press:2002:NRC} can be applied as
\begin{gather*}
p(A)=\frac{1}{2}\omega(n,A)Y^{(1)} + \varrho(n,A)Y^{(2)} +c_0I,\\
Y^{(n)} =\omega(n,A)Y^{(n+1)} + \varrho(n,A)Y^{(n+2)} +c_nI,\\
Y^{(N+2)} =Y^{(N+1)} =0.
\end{gather*}

Chebyshev polynomials $T_n(A)$ follow this recurrence with $\omega(n,A)=2A$ and $\varrho(n,A)=-I$, leading to the formula
\begin{equation} \label{eq:clenshaw-not-scaled}
\begin{gathered}
p(A)=AY^{(1)} - Y^{(2)} +c_0I,\\
Y^{(n)} =2AY^{(n+1)} - Y^{(n+2)} +c_nI,\\
Y^{(N+2)} =Y^{(N+1)} =0.
\end{gathered}
\end{equation}

This kind of polynomial filtering has been implemented, for example, in the Cucheb code~\cite{Aurentz:2017:CGI} and in the EVSL library~\cite{Li:2019:ESL} for real symmetric eigenvalue problems. Additionally, in the EVSL library, the spectrum is divided into subintervals, and the eigenvalues of each subinterval are extracted independently using polynomial or rational filters. This is a technique called spectrum slicing, whose goal is to obtain a large number of interior eigenpairs (thousands or tens of thousands) from a sparse matrix using this divide-and-conquer strategy.

\section{Polynomial filters for the BSE}\label{sec:bsefilt}

We will apply a polynomial filter to the BSE matrix $H$. Although it is not Hermitian, it is possible to use polynomial filtering due to the fact that $H$ is diagonalizable and its spectrum is real.

We aim at devising a polynomial filter that is structure-preserving, that is, where the matrix polynomial retains the algebraic properties of the argument matrix. For the specific case of BSE, we formalize it in the following definition.
\begin{definition}\label{def:sp-filter}
A structure-preserving filter for a definite BSE matrix $H$ is a function $q(H)$, where $H$ is the shifted-and-scaled matrix as in~\eqref{eq:scaling}, that preserves
\begin{itemize}
  \item[i.] the block structure of $H$ described in~\eqref{eq:bse-structure}, which induces the symmetry of the spectrum, and
  \item[ii.] the definiteness property $\SignBSE q(H)\succ0$, which is the analog of $\SignBSE H \succ 0$, and induces the realness of the spectrum.
\end{itemize}
\end{definition}

Both properties are required by structure-preserving methods such as those described in~\cite{Shao:2018:SPL} and~\cref{sec:subspace-iteration} below.

We propose the filter
\begin{equation}
  \label{eq:bse-filter}
  q_{[\alpha,\beta]}(H) = p_{[\alpha ,\beta]}(H)-p_{[-\beta ,-\alpha]}(H),
\end{equation}
where $p$ is the Chebyshev series described in~\eqref{eq:chebyshev-series-damping}.

This filter approximates the function $\phi_{[\alpha,\beta]}(H)-\phi_{[-\beta,-\alpha]}(H)$ making the corresponding negative eigenvalues dominant as well as the positive ones.
Another way to view this is that the filter approximates a new indicator function
\[\psi_{[\alpha,\beta]}(x)=\left\lbrace \begin{array}{ll}
-1, & x\in \left\lbrack -\beta ,-\alpha \right\rbrack, \\
1, & x\in \left\lbrack \alpha ,\beta \right\rbrack, \\
0, & x\not\in \left\lbrack -\beta ,-\alpha \right\rbrack \cup \left\lbrack \alpha ,\beta \right\rbrack.
\end{array}\right.\]
In this way, the symmetry of the spectrum is kept, since the structure-preserving methods rely on this property.

The corresponding series is
\begin{equation}\label{eq:bse-series}
q_{[\alpha,\beta]}(H)=
\sum_{n=0}^N{g_nc_n^+T_n(H)} - \sum_{n=0}^N{g_nc_n^-T_n(H)}
=\sum_{n=0}^N{g_n c_n T_n(H)},
\end{equation}
where $c_n:=c_n^+-c_n^-$. We use $c_n^+$ to denote the coefficients corresponding to $p_{[\alpha ,\beta]}(H)$, which filters the eigenvalues on the positive side of the spectrum, and $c_n^-$ for the coefficients of $p_{[-\beta ,-\alpha]}(H)$, which does the same to the corresponding negative eigenvalues.

This filter can be visualized in~\cref{fig:bsefilter}.

\def\lambdamax{5.865}
\def\lowerbound{1}
\def\upperbound{3}
\begin{figure}
\label{fig:bsefilter}
\centering
\begin{tikzpicture}[scale=1]
  \begin{axis}[
  axis lines=center,
  height = 5.6cm, width = 8cm, scale only axis,
  ymin = -7, ymax = 7,
  xtick=\empty,
  ytick=\empty,
  xtick={0},
  xticklabels={0},
  hide obscured x ticks=false,
  ticklabel style={font=\small,fill=white},
  ]
  \addplot [smooth,gray] table [x expr=-\lambdamax+0.01*\coordindex,y={y_none},col sep=comma] {bsefilterplotdata.txt};
  \addplot [smooth] table [x expr=-\lambdamax+0.01*\coordindex,y={y_jackson},col sep=comma] {bsefilterplotdata.txt};
  \node [yshift=15]  at (axis cs:-\upperbound,0) {$-\lb$};
  \node [yshift=15]  at (axis cs:-\lowerbound,0) {$-\ub$};
  \node [yshift=-15] at (axis cs:\lowerbound,0)  {$\ub$};
  \node [yshift=-15] at (axis cs:\upperbound,0)  {$\lb$};
  \end{axis}
\end{tikzpicture}
\caption{Representation of $q(x)$, where $q$ is the structure-preserving filter, and the parameters are the same as in~\cref{fig:filter}. Again, the lighter line corresponds to no damping, and the darker line to the Jackson damping.}
\end{figure}

\subsection{Theoretical foundations of the filter}

To prove that the filter satisfies~\cref{def:sp-filter} we need some intermediate results.

\begin{lemma}\label{lma:coeffs}
Let $c_n=c_n^+ - c_n^- \in \mathbb{R}$, with $n \in \mathbb{Z}_{\geq 0}$, be a coefficient of the truncated Chebyshev polynomial series~\eqref{eq:bse-series} that approximates $\psi$ in $[-1,1]$. Then $c_n = 2c_n^+$ if $n= 2k+1 , k \in \mathbb{Z}_{\geq 0}$, and $c_n = 0$ if $n= 2k, k \in \mathbb{Z}_{\geq 0}$.
\end{lemma}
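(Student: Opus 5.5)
The plan is to compute $c_n^-$ directly from the explicit formula~\eqref{eq:coeffs}, substituting the endpoints $-\beta$ and $-\alpha$ for $\alpha$ and $\beta$, and then compare the result with $c_n^+$. The key identity is $\arccos(-x)=\pi-\arccos(x)$ for $x\in[-1,1]$. Here $[\alpha,\beta]\subset[-1,1]$ is the scaled subinterval, so these arguments are admissible.

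For $n>0$, the formula gives
\[
c_n^- = \frac{2\left(\sin(n\arccos(-\beta))-\sin(n\arccos(-\alpha))\right)}{\pi n}.
\]
Writing $\theta_\alpha=\arccos(\alpha)$ and $\theta_\beta=\arccos(\beta)$, the identity gives
\[
\sin(n(\pi-\theta))=\sin(n\pi)\cos(n\theta)-\cos(n\pi)\sin(n\theta)=-(-1)^n\sin(n\theta).
\]
Hence
\[
c_n^- = \frac{2\left(-(-1)^n\sin(n\theta_\beta)+(-1)^n\sin(n\theta_\alpha)\right)}{\pi n} = (-1)^n c_n^+ .
\]
It follows that $c_n=c_n^+-c_n^-=(1-(-1)^n)c_n^+$. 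This equals $2c_n^+$ for odd $n$ and $0$ for even $n>0$.

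The case $n=0$ needs separate treatment. There,
\[
c_0^-=\frac{\arccos(-\beta)-\arccos(-\alpha)}{\pi}=\frac{(\pi-\theta_\beta)-(\pi-\theta_\alpha)}{\pi}=c_0^+,
\]
so $c_0=0$, which is consistent with the even case.

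Two points need a brief justification. First, the range convention of $\arccos$ (values in $[0,\pi]$) is what makes $\arccos(-x)=\pi-\arccos(x)$ valid on the whole interval. Second, the coefficients in~\eqref{eq:bse-series} carry the damping factors $g_n$ only as a common multiplier. The lemma is stated for the undamped $c_n$, so the $g_n$ do not interfere; the same parity conclusion holds for $g_nc_n$.

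As an alternative sanity check, $\psi$ is odd on $[-1,1]$ and $T_n$ has parity $(-1)^n$. The Chebyshev projection of an odd function therefore has vanishing even coefficients. This viewpoint could replace the computation, but the direct trigonometric route is cleaner given the explicit formula~\eqref{eq:coeffs}.

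I do not expect a real obstacle. The only care needed is in the sign bookkeeping of $\sin(n\pi-n\theta)$ and in handling $n=0$ separately.
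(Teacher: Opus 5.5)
Your proof is correct and follows the paper's argument: you substitute $\arccos(-x)=\pi-\arccos(x)$ into \eqref{eq:coeffs}, expand $\sin(n\pi-n\theta)$ to obtain $c_n^-=(-1)^n c_n^+$ for $n>0$, and treat $n=0$ separately. Writing the result as $c_n=(1-(-1)^n)c_n^+$ is just a more compact form of the paper's odd/even case split.
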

\begin{proof}
The coefficients are computed using the formula \eqref{eq:coeffs}. First, we prove the lemma for $n=0$,
\begin{align*}
c_0^- 
&= 
\frac{\arccos(-\beta )-\arccos(-\alpha)}{\pi } =
\frac{\pi -\arccos(\beta)-(\pi -\arccos(\alpha))}{\pi } \\ 
&=
\frac{\arccos(\alpha)-\arccos(\beta)}{\pi}.
\end{align*}

Since
\[c_0^+ =\frac{\arccos(\alpha)-\arccos(\beta)}{\pi},\]
it verifies that
\[c_0 = c_0^+ -c_0^- = 0.\]

Next, for the general case, we know that
\begin{align*}
c_n^-
&=
\frac{2(\sin(n\arccos(-\beta))-\sin (n\arccos(-\alpha)))}{\pi n} \\
&=
\frac{2(\sin(n\pi - n\arccos(\beta))-\sin(n\pi - n\arccos(\alpha)))}{\pi n}.
\end{align*}

Since
$\sin(n\pi-n\arccos(x)) = \sin(n\pi)\cos(n\arccos(x)) - \cos(n\pi)\sin(n\arccos(x))$
and taking into account that
$\sin(n\pi) = 0$ and $\cos(n\pi) = \begin{cases}-1, & n=2k+1,\\ 1, & n=2k,\end{cases}$

\[
c_n^- =
\begin{cases}
-\frac{2(\sin(n\arccos(\alpha))-\sin(n\arccos(\beta)))}{\pi n}, & n=2k+1, \\
\frac{2(\sin(n\arccos(\alpha))-\sin(n\arccos(\beta)))}{\pi n}, & n=2k.
\end{cases}
\]
Therefore, knowing
\[c_n^+ = \frac{2(\sin(n\arccos(\alpha))-\sin(n\arccos(\beta)))}{\pi n},\]
we can conclude that
\[c_n=c_n^+ - c_n^- =
\begin{cases}
2c_n^+, & n=2k+1, \\
0, & n=2k.
\end{cases}
\]
\end{proof}

\begin{lemma}\label{lma:odd-powers}
If $H \in \mathbb{C}^{2m\times 2m}$ is a matrix with structure as defined in~\eqref{eq:bse-structure}, then $H^{2k+1}, k \in \mathbb{Z}_{\geq 0}$, retains the same structure.
\end{lemma}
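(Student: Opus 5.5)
The plan is to characterize the BSE structure~\eqref{eq:bse-structure} algebraically and then check that odd powers preserve this characterization. Let $K=\left[\begin{smallmatrix} & I_m\\ I_m & \end{smallmatrix}\right]$. A matrix $H=\left[\begin{smallmatrix} A & B\\ D & E\end{smallmatrix}\right]$ has BSE structure if and only if it satisfies two identities:
\begin{equation*}
\SignBSE H = (\SignBSE H)^*, \qquad K\overline{H}K = -H .
\end{equation*}
To see this, note that the first identity says $A=A^*$, $E=E^*$ and $D=-B^*$. The second identity reads $\left[\begin{smallmatrix} \overline{E} & \overline{D}\\ \overline{B} & \overline{A}\end{smallmatrix}\right] = -\left[\begin{smallmatrix} A & B\\ D & E\end{smallmatrix}\right]$, which gives $E=-\overline{A}$ and $D=-\overline{B}$. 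Putting these together yields $E=-\overline{A}=-A^T$, $D=-B^*$ and $\overline{B}=B^*$, that is, $B=B^T$. This is exactly~\eqref{eq:bse-structure} with $R=A$ and $C=B$. The converse direction is immediate.

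Next, we show that $H^{2k+1}$ inherits both identities. For the conjugation identity, $K^2=I$ gives $K\overline{H^{j}}K=(K\overline{H}K)^j=(-H)^j$, which equals $-H^j$ when $j$ is odd. For the pseudo-Hermitian identity, write $\SignBSE H=H^*\SignBSE$ and use $\SignBSE^2=I$. This gives $\SignBSE H^j=(H^*)^j\SignBSE=(H^j)^*\SignBSE$, so $\SignBSE H^j=(\SignBSE H^j)^*$, and this holds for every $j$. Since both identities hold for $H^{2k+1}$, the equivalence above shows that $H^{2k+1}$ has BSE structure.

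I expect the only delicate point to be the equivalence step. One must check that the two identities pin down the blocks exactly, in particular that $C=C^T$ follows from combining $D=-B^*$ with $D=-\overline{B}$. One also needs to notice that even powers fail only the conjugation identity, because of the sign $(-1)^j$. As an alternative, one could proceed by induction, writing $H^{2k+1}=H^2\cdot H^{2k-1}$ and multiplying the blocks out directly. However, the identity-based argument avoids messy block algebra, so I will use it.
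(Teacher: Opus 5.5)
Your proof is correct, but it takes a different route from the paper's.

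\textbf{The paper's route.} The paper multiplies blocks explicitly. It first shows that the product of two commuting BSE-structured matrices has the intermediate form~\eqref{eq:semibse-structure}: a Hermitian $(1,1)$-block, a skew-symmetric $(1,2)$-block, and a $(2,2)$-block equal to $+R_C^T$. It then shows that a BSE matrix times a commuting matrix of that intermediate form is BSE again. Finally it inducts via $H^{2k}=HH^{2k-1}$ and $H^{2k+1}=HH^{2k}$.

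\textbf{Your route.} You encode the structure as the two symmetries $\Sigma_m H\Sigma_m=H^*$ and $K\overline{H}K=-H$, and you check how they behave under powers. Your equivalence argument is right, in particular obtaining $C=C^T$ from $D=-B^*=-\overline{B}$. The power identities are right too.

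\textbf{What each approach buys.} Your argument is more conceptual and more general.
\begin{itemize}
\item The paper's intermediate structure is exactly pseudo-Hermiticity combined with $K\overline{H}K=+H$. This explains the odd/even pattern at a glance.
\item It shows where commutativity matters: only in the pseudo-Hermitian identity. The conjugation identity is multiplicative for arbitrary factors.
\item Both identities are preserved under real linear combinations. So you get directly that $p(H)$ has BSE structure for every odd polynomial $p$ with real coefficients. That is all \cref{thm:p-is-bse} needs, with no detour through expanding $T_{2k+1}$ into odd powers.
\item Incidentally, your pair of identities is equivalent to J-symmetry plus pseudo-Hermiticity as introduced in \cref{sec:bse}.
\end{itemize}
The paper's computation is fully elementary and yields explicit block formulas for the products. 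These are in the same half-block language later used to apply the filter implicitly. The price is more tedious algebra.
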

\begin{proof}
Let $H_A= \left[\begin{smallmatrix}
      R_A & C_A \\
   -C_A^* & -R_A^T
 \end{smallmatrix}\right]$ and $H_B= \left[\begin{smallmatrix}
       R_B & C_B \\
    -C_B^* & -R_B^T
  \end{smallmatrix}\right]$ be two matrices with structure~\eqref{eq:bse-structure} such that they commute. Then
\begin{align*}
H_C=H_AH_B &=
\begin{bmatrix}
R_AR_B-C_AC_B^*  & R_AC_B-C_AR_B^T \\
-C_A^*R_B+R_A^TC_B^* & -C_A^*C_B+R_A^TR_B^T
\end{bmatrix}
, \\
H_C=H_BH_A &=
\begin{bmatrix}
R_BR_A-C_BC_A^*  & R_BC_A-C_BR_A^T \\
-C_B^*R_A+R_B^TC_A^* & -C_B^*C_A+R_B^TR_A^T
\end{bmatrix}
.
\end{align*}

Defining the matrix blocks $
H_C=\begin{bmatrix}
B_1 & B_2 \\
B_3 & B_4
\end{bmatrix}
$, we have
\[
\begin{aligned}
B_1^T&={(R_AR_B - C_AC_B^*)}^T = -C_B^*C_A + R_B^TR_A^T=B_4,\\
-B_2^*&={-(R_AC_B -C_AR_B^T)}^* = -C_B^*R_A + R_B^TC_A^*=B_3,\\
B_1^*&={(R_AR_B - C_AC_B^*)}^* = R_BR_A - C_BC_A^*=B_1,\\
-B_2^T&={-(R_AC_B -C_AR_B^T)}^T = R_BC_A - C_BR_A^T = B_2,
\end{aligned}
\]
or, renaming $B_1$ and $B_2$ as $R_C$ and $C_C$, respectively,
\begin{equation}\label{eq:semibse-structure}
H_C=H_AH_B=
\begin{bmatrix}
R_C  & C_C \\
-C_C^*  & R_C^T
\end{bmatrix},
\quad R_C = R_C^*,
\quad C_C = -C_C^T.
\end{equation}

Note that $H_C$ does not satisfy the definition of BSE matrix due to the sign of the $(2,2)$-block and the skew-symmetry of $C_C$.

We now consider a matrix $H_A$ with structure~\eqref{eq:bse-structure} and a matrix $H_C$ with structure~\eqref{eq:semibse-structure} such that they commute,
\begin{align*}
  H_D=H_AH_C &=
  \begin{bmatrix}
    R_AR_C -C_AC_C^*  & R_AC_C +C_AR_C^T \\
    -C_A^*R_C + R_A^TC_C^*  & -C_A^*C_C - R_A^TR_C^T
  \end{bmatrix}, \\ 
  H_D=H_CH_A &=
  \begin{bmatrix}
    R_CR_A - C_CC_A^*  & R_CC_A - C_CR_A^T \\
    -C_C^*R_A - R_C^TC_A^*  & -C_C^*C_A - R_C^TR_A^T
  \end{bmatrix},
\end{align*}
and define the matrix blocks $
H_D=\begin{bmatrix}
\hat{B}_1 & \hat{B}_2 \\
\hat{B}_3 & \hat{B}_4
\end{bmatrix}
$. Then
\[
\begin{aligned}
-\hat{B}_1^T&=-{(R_AR_C - C_AC_C^*)}^T =-C_C^*C_A - R_C^TR_A^T=\hat{B}_4,\\
-\hat{B}_2^*&={-(R_AC_C + C_AR_C^T)}^* =-C_C^*R_A - R_C^TC_A^*=\hat{B}_3,\\
\hat{B}_1^*&={(R_AR_C - C_AC_C^*)}^* = R_CR_A - C_CC_A^*=\hat{B}_1,\\
\hat{B}_2^T&={(R_AC_C + C_AR_C^T)}^T = R_CC_A - C_CR_A^T=\hat{B}_2^T,
\end{aligned}
\]
which shows that $H_D$ has structure~\eqref{eq:bse-structure},
\begin{equation}\label{eq:product-bse-semibse}
H_D = H_AH_C =
\begin{bmatrix}
R_D  & C_D \\
-C_D^*  & -R_D^T
\end{bmatrix},
\quad R_D = R_D^*,
\quad C_D = C_D^T.
\end{equation}

If $H^{2k-1}$ has BSE structure, eq.~\eqref{eq:semibse-structure} implies that $H^{2k}=H^{2k-1}H=HH^{2k-1}$ has structure~\eqref{eq:semibse-structure}, and similarly, eq.~\eqref{eq:product-bse-semibse} shows that $H^{2k+1}=H^{2k}H=HH^{2k}$ has BSE structure. We can then prove by induction, starting from the base case $k=1 \mid H^{2k-1} = H$, with known BSE structure, that the lemma holds.
\end{proof}

Note that, given a definite BSE matrix $H$, scaling the matrix using~\eqref{eq:scaling} preserves the BSE structure, since taking into account that $\lambda_{\min}={-\lambda}_{\max}$, we have

\begin{equation}\label{eq:bse-scaling}
\begin{gathered}
  \sigma=\frac{\lambda_{\max} +\lambda_{\min}}{2}=0, \quad
  \rho=\frac{\lambda_{\max} -\lambda_{\min}}{2}=\lambda_{\max}, \\
  \tilde H=\frac{H-\sigma I}{\rho} = \frac{H}{\lambda_{\max}}.
\end{gathered}
\end{equation}
In the rest of this section, we use $H$ to refer to the scaled matrix $\tilde H$, with $[\alpha,\beta]$ the corresponding scaled subinterval.

\begin{theorem}\label{thm:p-is-bse}
Given a BSE matrix $H \in \mathbb{C}^{m\times m}$ with its spectrum inside $[-1,1]$, and a subinterval $[\alpha,\beta] \subset [0,1]$,
the matrix $q(H)=q_{[\alpha,\beta]}(H)$ has BSE structure~\eqref{eq:bse-structure}.
\end{theorem}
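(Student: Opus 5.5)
By \cref{lma:coeffs}, $q(H)$ involves only odd-degree Chebyshev polynomials, so the plan is to show that each of them is a real linear combination of odd powers of $H$ and then invoke \cref{lma:odd-powers}.

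First, write the filter via the series \eqref{eq:bse-series} as
\[
q(H)=\sum_{n=0}^N g_n c_n T_n(H).
\]
\Cref{lma:coeffs} tells us that $c_n=0$ for even $n$ and $c_n=2c_n^+$ for odd $n$. Hence
\[
q(H)=\sum_{k=0}^{\lfloor (N-1)/2\rfloor} 2 g_{2k+1} c_{2k+1}^+ \, T_{2k+1}(H).
\]
Both $g_n$ and $c_n^+$ are real. For $g_n$ this can be read off \cref{tab:damping}; for $c_n^+$ it follows from \eqref{eq:coeffs}, since $\alpha,\beta\in[0,1]$ makes the arccosines real.

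Next, show that each $T_{2k+1}$ is an odd polynomial with real coefficients. This follows by induction from the recurrence \eqref{eq:chebyshev-terms}. The base cases are $T_0=1$ (even) and $T_1=x$ (odd). In the inductive step, $T_{n+1}=2xT_n-T_{n-1}$ has the parity of $n+1$ whenever $T_n$ and $T_{n-1}$ have the parities of $n$ and $n-1$, respectively. Consequently
\[
T_{2k+1}(H)=\sum_{j=0}^{k} a_{k,j} H^{2j+1}, \qquad a_{k,j}\in\mathbb{R},
\]
and therefore $q(H)$ is a real linear combination of odd powers $H^{2j+1}$.

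By \cref{lma:odd-powers}, each $H^{2j+1}$ has BSE structure \eqref{eq:bse-structure}. It remains to check that the set of matrices with this structure is closed under real linear combinations. Write $S=\sum_j \gamma_j H_j$ with $\gamma_j\in\mathbb{R}$ and each $H_j$ in BSE form with blocks $R_j, C_j$. Its blocks are $R=\sum\gamma_j R_j$, which is Hermitian, and $C=\sum\gamma_j C_j$, which is symmetric. The $(2,1)$ block is $-\sum\gamma_j C_j^*=-C^*$, using that the $\gamma_j$ are real. The $(2,2)$ block is $-\sum\gamma_j R_j^T=-R^T$. Thus $q(H)$ has BSE structure.

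The step needing most care is the realness of the coefficients. Conjugation enters the structure through $R=R^*$ and the $-C^*$ block, so complex coefficients would break closure; this is why the hypothesis $[\alpha,\beta]\subset[0,1]$, which keeps the coefficients real, matters. The argument also relies on the scaling \eqref{eq:bse-scaling}, with $\sigma=0$, so that no identity term is added, since $I$ itself is not of BSE form. The parity argument and \cref{lma:coeffs} together guarantee that no even-degree term, and in particular no $c_0 I$ term, survives.
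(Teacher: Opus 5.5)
Your proposal is correct and is essentially the paper's proof: Lemma~\ref{lma:coeffs} removes the even-degree terms, each $T_{2k+1}(H)$ is a combination of odd powers of $H$, Lemma~\ref{lma:odd-powers} gives each odd power BSE structure, and the sum inherits that structure. Your explicit check that the scalars $g_{2k+1}c^+_{2k+1}$ and the Chebyshev coefficients are real (which the paper's ``sum of scaled matrices'' step tacitly needs, since conjugation enters through the $-C^*$ block) and your handling of even $N$ are welcome refinements; note, though, that realness only needs $\alpha,\beta\in[-1,1]$, not the stronger hypothesis $[\alpha,\beta]\subset[0,1]$.
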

\begin{proof}
According to~\eqref{eq:bse-series},
\[
q_{[\alpha,\beta]}(H) = \sum_{n=0}^{N}{g_n(c_n^+-c_n^-)T_n(H)}.
\]
Using~\cref{lma:coeffs}, and assuming $N$ is odd, we can replace this series with
\[
q_{[\alpha,\beta]}(H) = \sum_{k=0}^{(N-1)/2}{2g_{2k+1}c_{2k+1}^+ T_{2k+1}(H)}.
\]

Chebyshev polynomials of odd degree $T_{2k+1}(H)$ can be expressed as a sum containing only scaled odd powers of $H$.
According to~\cref{lma:odd-powers}, odd powers of $H$ preserve structure~\eqref{eq:bse-structure}. A sum of scaled matrices preserves this structure too. Therefore $q_{[\alpha,\beta]}(H)$ preserves the structure of $H$.
\end{proof}

After proving that the filter preserves the BSE structure, we are concerned with the positive definiteness of $\SignBSE q(H)$.

\begin{theorem}
Let $H \in \mathbb{C}^{m\times m}$ be a BSE matrix with its spectrum inside $[-1,1]$ such that $\SignBSE H \succ 0$ and let $[\alpha,\beta] \subset [0,1]$. The matrix $q(H)=q_{[\alpha,\beta]}(H)$ verifies $\SignBSE q(H) \succ 0$, as long as $g_n$ in the series~\eqref{eq:bse-series} of $q$ are the coefficients of a kernel with the positivity property.
\end{theorem}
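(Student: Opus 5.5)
We plan to work through the diagonalization \eqref{eq:bse-diagonalization}. Since $H$ is a definite BSE matrix, $H=X\Lambda X^{-1}$ with $X^{-1}=Y^*$. The pseudo-Hermitian structure gives the relation between $X$ and $Y$: from the block forms in \eqref{eq:bse-diagonalization} one reads off $Y=\SignBSE X \SignBSE$. Hence $Y^*X=I$ becomes
\[
X^*\SignBSE X=\SignBSE .
\]
We will also use $X^{-1}=\SignBSE X^*\SignBSE$. Any polynomial then satisfies $q(H)=X q(\Lambda) X^{-1}$, and therefore
\[
\SignBSE q(H)=\SignBSE X q(\Lambda)\SignBSE X^*\SignBSE = X^{-*}\,\bigl(q(\Lambda)\SignBSE\bigr)\,X^{-1}.
\]
The second equality uses $\SignBSE X=X^{-*}\SignBSE$, which follows from $X^*\SignBSE X=\SignBSE$. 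So $\SignBSE q(H)$ is congruent to the real diagonal matrix $q(\Lambda)\SignBSE=\text{diag}(q(\Lambda_+),-q(-\Lambda_+))$.

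By Sylvester's law of inertia, positive definiteness of $\SignBSE q(H)$ is therefore equivalent to two conditions for every eigenvalue $\lambda>0$ of $H$: $q(\lambda)>0$ and $-q(-\lambda)>0$. By \cref{lma:coeffs} and the argument of \cref{thm:p-is-bse}, $q$ is an odd polynomial, so $-q(-\lambda)=q(\lambda)$. Both conditions thus reduce to
\[
q(x)>0 \quad \text{for every } x\in(0,1] .
\]
It suffices to prove this for all such $x$, not only at eigenvalues. Definiteness of $\SignBSE H$ guarantees that $0$ is not an eigenvalue, so only $x>0$ matters.

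For this scalar inequality we use the kernel representation from~\cite{Weisse:2006:KPM}:
\[
p_{[a,b]}(x)=\int_{-1}^1 K_N(x,y)\,\phi_{[a,b]}(y)\,dy, \qquad \text{hence} \qquad q(x)=\int_{-1}^1 K_N(x,y)\,\psi_{[\alpha,\beta]}(y)\,dy .
\]
The kernel built from Chebyshev polynomials, $K_N(x,y)\propto \sum_n g_n T_n(x)T_n(y)/\sqrt{1-y^2}$ up to the weight, satisfies $K_N(-x,-y)=K_N(x,y)$. Using the oddness of $\psi$ and substituting $y\to -y$ on $[-\beta,-\alpha]$, we obtain
\[
q(x)=\int_\alpha^\beta \bigl(K_N(x,y)-K_N(x,-y)\bigr)\,dy .
\]

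\textbf{Main obstacle.} Positivity of the kernel alone does not obviously give $K_N(x,y)>K_N(x,-y)$ for $x,y>0$. We would first try to exploit the explicit Jackson/Fejér structure: $K_N(x,y)$ is a positive function of $\theta-\vartheta$ and $\theta+\vartheta$ (where $x=\cos\theta$, $y=\cos\vartheta$) that decreases in the angular distance on $[0,\pi]$. For $x,y>0$, the point $y$ is angularly closer to $x$ than $-y$ is, which would give the inequality. If monotonicity of the kernel proves unavailable in general, we fall back on a weaker claim that the paper may be intending. The weaker claim is this: on the positive side, positivity gives $p_{[\alpha,\beta]}(x)\ge 0$, and the Gibbs-free damping makes $p_{[-\beta,-\alpha]}(x)$ small. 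We would then show $q(x)>0$ via the odd-symmetry argument, at least for eigenvalues where $p_{[\alpha,\beta]}$ dominates. We would flag this as the delicate step.
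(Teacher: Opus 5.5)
Your reduction is correct and matches the paper's proof. The paper rewrites $\Sigma_m X q(\Lambda)Y^*$ as $Y\,\mathrm{diag}\bigl(q(\Lambda_+),q(\Lambda_+)\bigr)\,Y^*$, which is your congruence $X^{-*}\bigl(q(\Lambda)\Sigma_m\bigr)X^{-1}$ since $Y=X^{-*}$, and then applies Sylvester's law.

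As you note, this makes $\Sigma_m q(H)\succ0$ \emph{equivalent} to $q(\lambda)>0$ at every positive eigenvalue. Every $x\in(0,1]$ is an eigenvalue of some admissible $H$ (take $R=\mathrm{diag}(x,1)$, $C=0$), so the statement stands or falls with $q>0$ on $(0,1]$. The paper disposes of this in one sentence (``because of the positivity property of the kernel''). You are right that it does not follow: positivity only gives $p_{[\alpha,\beta]}\ge0$ and $p_{[-\beta,-\alpha]}\ge0$ separately.

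Your proposal does not close the step either. The monotonicity route fails because the relevant kernels are not monotone in the angle. The Fejér kernel is $\frac{1}{N+1}\sin^2\bigl((N+1)\phi/2\bigr)/\sin^2(\phi/2)$, which vanishes at $\phi=2\pi k/(N+1)$, and the Jackson kernel also has zeros. The fallback is not an argument, least of all near $x=0$. There $p_{[-\beta,-\alpha]}(x)=p_{[\alpha,\beta]}(-x)$, so the two terms coincide at $0$ and only their slopes differ.

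In fact the step is false, so no argument from kernel positivity alone can work. Let $F(\phi)=g_0+2\sum_{n=1}^N g_n\cos n\phi$ and $h(u)=F(u)-F(\pi-u)$. Writing $q(\cos\theta)$ as the convolution of $F$ with $\psi(\cos\vartheta)$ gives $q'(0)=\frac{1}{\pi}\bigl[h(\arcsin\alpha)-h(\arcsin\beta)\bigr]$. Hence $q<0$ just to the right of $0$ whenever $h(\arcsin\alpha)<h(\arcsin\beta)$, which oscillating kernels allow.

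Concretely, take Fejér with $N=9$ ($g_n=1-n/10$), $\alpha=\sin(\pi/5)$, $\beta=\sin(3\pi/10)$. Then $h(\arcsin\alpha)=0$, $q'(0)\approx-0.114$, and a direct evaluation of the series gives $q(0.05)\approx-0.0053$. Jackson with $N=9$, $\alpha=\sin(3\pi/11)$, $\beta=\sin(4\pi/11)$ gives $q'(0)\approx-0.017$.

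Take $R=\mathrm{diag}(0.05,1)$, $C=0$. This is a definite BSE matrix with spectrum $\{\pm0.05,\pm1\}$. For it, $\Sigma_m q(H)=\mathrm{diag}\bigl(q(0.05),q(1),q(0.05),q(1)\bigr)$, which is not positive definite.

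So what is missing is an extra hypothesis. Your pairing of $y$ with $-y$ is exactly right for a kernel that is nonincreasing in angular distance on $[0,\pi]$. An example is $F(\phi)\propto\cos^{2N}(\phi/2)$, i.e.\ $g_n=\binom{2N}{N-n}/\binom{2N}{N}$, which is also positive. For such a kernel the pairing proves $q>0$ on $(0,1]$, and hence the theorem. For Fejér or Jackson one can conclude $\Sigma_m q(H)\succ0$ only after checking $q>0$ on the positive spectrum.
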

\begin{proof}
From~\cite[Theorem 3]{Shao:2017:PDB}, if the matrix $\SignBSE H$ is positive definite, $H$ can be diagonalized as in~\eqref{eq:bse-diagonalization} and the converse is also true.
Then,
\[
\SignBSE q(H)=
\SignBSE q\left(
\begin{bmatrix}
  \XX & \oXY \\
  \XY & \oXX
\end{bmatrix}
\begin{bmatrix}
  \Lambda_+ &  \\
   & -\Lambda_+
\end{bmatrix}
\begin{bmatrix}
   \XX & -\oXY \\
  -\XY & \oXX
\end{bmatrix}^*
\right).
\]
Since
$
\begin{bmatrix}
   \XX & -\oXY \\
  -\XY & \oXX
\end{bmatrix}^*
\begin{bmatrix}
  \XX & \oXY \\
  \XY & \oXX
\end{bmatrix}
= I_{2m},
$
 from the definition of matrix function we know that
$$
\SignBSE q(H)=
\SignBSE
\begin{bmatrix}
  \XX & \oXY \\
  \XY & \oXX
\end{bmatrix}
q\left(
\begin{bmatrix}
  \Lambda_+ &  \\
   & -\Lambda_+
\end{bmatrix}
\right)
\begin{bmatrix}
   \XX & -\oXY \\
  -\XY & \oXX
\end{bmatrix}^*.
$$
If $\lambda_i>0$ (resp.~$\lambda_i<0$), $i=1,\dots,m$, then $q(\lambda_i)>0$ (resp.~$q(\lambda_i)<0$) because of the positivity property of the kernel~\cite{Weisse:2006:KPM}. Therefore, $\tilde{\Lambda}_+=q(\Lambda_+)$ is positive, and
$$
\SignBSE q(H)=
\SignBSE
\begin{bmatrix}
  \XX & \oXY \\
  \XY & \oXX
\end{bmatrix}
\begin{bmatrix}
  \tilde{\Lambda}_+ &  \\
   & -\tilde{\Lambda}_+
\end{bmatrix}
\begin{bmatrix}
   \XX & -\oXY \\
  -\XY & \oXX
\end{bmatrix}^*.
$$
Then we operate with this expression to obtain
$$
\SignBSE q(H)=
\begin{bmatrix}
   \XX & -\oXY \\
  -\XY & \oXX
\end{bmatrix}
\begin{bmatrix}
  \tilde{\Lambda}_+ &  \\
   & \tilde{\Lambda}_+
\end{bmatrix}
\begin{bmatrix}
   \XX & -\oXY \\
  -\XY & \oXX
\end{bmatrix}^*.
$$
Using Sylvester's law of inertia (*-congruence), $\SignBSE q(H)$ is positive definite.
\end{proof}
As we can see in~\cref{tab:damping}, both the Fejer and Jackson kernels satisfy the positivity property and are suitable for the filter.

\section{A Chebyshev filtered subspace iteration for BSE}\label{sec:subspace-iteration}

First, we will discuss how the subspace iteration method can be adapted for the definite BSE matrix $H$. Subsequently, in~\cref{sec:algorithm}, we will combine it with the structure-preserving filter.

The subspace iteration method described in~\cref{sec:si} relies on the well-known Rayleigh-Ritz procedure, that realizes an orthogonal projection onto a subspace $\mathcal{V}$ obtained by successively applying matrix $A$ to an initial subspace. More precisely, eigenvector approximations are selected from such subspace, $\tilde{x}\in\mathcal{V}$, in a way that the Galerkin condition is satisfied, i.e., the corresponding residual is orthogonal to the same subspace, $r\perp\mathcal{V}$, where $r=A\tilde{x}-\tilde{\lambda}\tilde{x}$ and $(\tilde{\lambda},\tilde{x})$ is the Ritz pair. Let $Q\in\mathbb{C}^{m\times k}$ be an orthogonal basis of $\mathcal{V}$, then $\tilde{x}=Qw$ for some $w\in\mathbb{C}^k$, and the Galerkin condition is expressed as $Q^*r=0$, resulting in the projected eigenproblem $Q^*AQw=\tilde{\lambda}w$.

In order to derive a structure-preserving version of subspace iteration for a BSE matrix $H$, we consider an oblique projection, where there are two different subspaces. In particular, the approximations $\tilde{x}\in\mathcal{V}$ satisfy the Petrov--Galerkin condition $r\perp\SignBSE\mathcal{V}$. In this case, the bases of the left and right subspaces, $\SignBSE Q$ and $Q$, respectively, with $Q\in\mathbb{C}^{2m\times 2k}$, must be bi-orthogonal,
\begin{equation}\label{eq:biorth}
Q^*\SignBSE Q=\SignOutBSE,
\end{equation}
where $\SignOutBSE$ is a signature matrix, or, equivalently, the $Q$ basis must be $(\SignBSE,\SignOutBSE)$-orthogonal, i.e., its columns must be orthogonal with respect to the indefinite inner product induced by the signature matrix $\SignBSE$. The Petrov--Galerkin condition then reads
\begin{equation}\label{eq:petrov-galerkin}
Q^*\SignBSE HQw=\tilde{\lambda}Q^*\SignBSE Qw,
\end{equation}
or
\begin{equation}\label{eq:projected}
Mw=\tilde{\lambda}\SignOutBSE w,
\end{equation}
with the Hermitian matrix $M=Q^*\hat{H}Q$.

Additionally, our method will converge to a partial diagonalization where the eigenvectors will have structure~\eqref{eq:bse-diagonalization}, so the sequence of subspaces generated by the algorithm will have bases with structure $Q=
\left[\begin{smallmatrix}
  \QX & \oQY \\
  \QY & \oQX
\end{smallmatrix}\right],
$ 
orthonormalized as in~\eqref{eq:biorth}, with $\SignOutBSE=
\left[\begin{smallmatrix}
  I_k &  \\
   & -I_k
\end{smallmatrix}\right]
$. We will discuss the required orthonormalization in~\cref{sec:orthogonality}.

Now, considering both constraints, the generalized eigenproblem~\eqref{eq:projected} can be written as
\begin{equation}
\begin{bmatrix}
  \MX & \MY \\
  \oMY & \oMX
\end{bmatrix}
w = \tilde{\lambda}
\begin{bmatrix}
  I_k &  \\
   & -I_k
\end{bmatrix}
w,
\end{equation}
which is equivalent to a standard eigenproblem with a matrix with BSE structure.

In other words, we derive the structure-preserving subspace iteration from viewing the BSE problem as a Hermitian positive-definite pencil $(\hat{H},\SignBSE)$ with an indefinite right-hand side matrix $\SignBSE$, which is projected to a small eigenproblem $(M,\SignOutBSE)$ with the same structure. A practical implementation of this procedure will require particular attention to the $(\SignBSE,\SignOutBSE)$-orthogonality of the basis as well as preserving the structure in all steps of the computation, e.g., when forming $M$.

\subsection{Orthogonalization} \label{sec:orthogonality}

The basis of the subspace $\mathcal{V}$ must be built so that it satisfies the bi-orthogonality condition~\eqref{eq:biorth}.
For this purpose, we employ the SVQB method~\cite{Stathopoulos:2002:BOP} adapted for the case of an indefinite inner product, $Q^*\SignIn Q = \SignOut$, where $\SignIn$, $\SignOut$ are two signature matrices of appropriate size. More precisely, for the specific case of the BSE matrix we propose a block version where $\SignIn = \SignBSE$ and $\SignOut = \SignOutBSE$. These algorithms have also been described recently by Shan and Shao~\cite{Shan:2026:SPL} in the context of a structure-preserving BSE version of the LOBPCG algorithm.

The SVQB method computes $X=QB$ in order to obtain an orthonormal basis $Q$, which is derived from the right singular vectors of $X$. Note the similarity to a QR decomposition with the difference that $B$ is a full matrix. However, since we are only interested in the basis $Q$, the shape of $B$ is irrelevant. This method is computationally efficient due to its use of BLAS-3 operations, and lower synchronization requirements in a parallel environment.

We use a generalized version of this algorithm to obtain a $(\SignIn ,\SignOut)$-orthogonal basis from the right singular vectors of a hyperbolic singular value decomposition~\cite{Onn:1991:HSV} of $X$,
\begin{equation}\label{eq:hsvd}
X=\SVDU \SVDVal \SVDV^*, \qquad \SVDU^*\SignIn U = \SignOut,\; V^*V=I,
\end{equation}
where we assume the columns in $X$ to be $\SignIn$-normalized.
The hyperbolic singular values $\SVDVal$ and right hyperbolic singular vectors $\SVDV$ are obtained from the eigendecomposition of $X^*\SignIn X$, so that $X^*\SignIn X=V\SVDVal^*\SVDU^*\SignIn\SVDU\SVDVal\SVDV^*=V\SVDVal^*\SignOut\SVDVal\SVDV^*$. This eigendecomposition is then
\begin{equation}
SV = V\SignOut\Lambda, \qquad S = X^*\SignIn X, \qquad \Lambda=\SVDVal^2.
\end{equation}
The $(\SignIn ,\SignOut)$-orthogonalized basis $Q$ is computed as $Q=U=X\SVDV\Lambda^{-1/2}$, satisfying $\text{span}(Q)=\text{span}(X)$, and $Q^*\SignIn Q = \SignOut$.

If the columns of $X$ are not $\SignIn$-normalized, the normalization is performed as in the SVQB method, taking advantage of the fact that $D = \text{diag}(S)$ contains the column norms of $X$.

This algorithm, which we will refer to as SVQBI (\cref{alg:svqbi}), can be adapted to the BSE matrix (\cref{alg:svqbi-bse}). We build $S'$ with $\SignIn=\SignBSE$ as

\[
\begin{bmatrix}
\SX' & -\oSY'\\
\SY' & -\oSX'
\end{bmatrix}
=
\begin{bmatrix}
\XX & \oXY \\
\XY & \oXX
\end{bmatrix}^* 
\SignBSE
\begin{bmatrix}
\XX & \oXY \\
\XY & \oXX
\end{bmatrix}.
\]
$S_1'$ is Hermitian by construction. Then 
$|\text{diag}(S')|=
\left[\begin{smallmatrix}
|\text{diag}(\SX')| & \\
& |\text{diag}(\SX')| 
\end{smallmatrix}\right]
=
\left[\begin{smallmatrix}
|D_1| & \\
& |D_1| 
\end{smallmatrix}\right],
$
where $D_1$ is real. We implicitly work with the normalized vector basis $XD^{-1/2}$ by scaling $S'$ to obtain
\[
\begin{bmatrix}
\SX & -\oSY\\
\SY & -\oSX
\end{bmatrix}
=
\begin{bmatrix}
|D_1|^{-1/2} & \\
& |D_1|^{-1/2} 
\end{bmatrix}
\begin{bmatrix}
\SX' & -\oSY'\\
\SY' & -\oSX'
\end{bmatrix}
\begin{bmatrix}
|D_1|^{-1/2} & \\
& |D_1|^{-1/2} 
\end{bmatrix}.
\]

The structure of $S$ is similar to a BSE structure, but with the difference that $S_2$ is skew-symmetric and, therefore, $S$ is Hermitian. We make the choice to solve this problem as Hermitian, although the structure of the matrix would allow to create a specific structure-preserving method. In fact, $S$ is $Z$-skew-adjoint with respect to the scalar product in its bilinear form ${\langle x,y\rangle}_Z=x^TZy$, with $Z=\left[\begin{smallmatrix}0 & I\\I & 0\end{smallmatrix}\right]$, which implies that $S$ forms a Lie algebra $\mathbb{L}$. Therefore, and according to Mackey \textit{et al.}~\cite{Mackey:2006:SFS}, its eigenvalues appear in pairs $(\lambda,-\lambda)$, which are real because $S$ is Hermitian. We now consider the eigenvectors of $S$. Using $S^T=-ZSZ$ we obtain that if $Sx=\lambda x$, then ${(Zx)}^TS=-\lambda{(Zx)}^T$. Additionally, $x^*S=\lambda x^*$ because of the Hermicity of $S$. Using these properties, $S$ can be diagonalized as

\[
\begin{bmatrix}\SX & -\oSY \\ \SY & -\oSX\end{bmatrix}
\begin{bmatrix}\VX & \oVY \\ \VY & \oVX \end{bmatrix}
=
\begin{bmatrix}\VX & \oVY \\ \VY & \oVX \end{bmatrix}
\begin{bmatrix}\Lambda_+ & \\ & -\Lambda_+\end{bmatrix},
\]
where the signs of the eigenvalues can be extracted as $\SignOutBSE$. \Cref{alg:svqbi-bse} does not give $\SignOutBSE$ as an output because this matrix is already known beforehand. 

Lastly, we obtain the $(\SignBSE,\SignOutBSE)$-orthogonalized basis as
\[
\begin{bmatrix}
\QX & \oQY \\
\QY & \oQX
\end{bmatrix}
=
\begin{bmatrix}
\XX & \oXY \\
\XY & \oXX
\end{bmatrix}
\begin{bmatrix}
|D_1|^{-1/2} & \\
& |D_1|^{-1/2} 
\end{bmatrix}
\begin{bmatrix}
\VX & \oVY \\
\VY & \oVX
\end{bmatrix}
\begin{bmatrix}
\Lambda_+^{-1/2} & \\ 
& -\Lambda_+^{-1/2}
\end{bmatrix}.
\]
Several iterations of SVQBI-BSE may be needed to obtain the $(\SignBSE,\SignOutBSE)$-orthogonalized basis in finite precision, as pointed out by the authors of the SVQB method.

We believe it would be possible to adapt the LDLIQR2 algorithm~\cite{Benner:2022:SEC} in a similar manner to the BSE matrix case too, as an alternative.

\begin{algorithm}[t]
  \caption{SVQBI}
  \label{alg:svqbi}
  \begin{algorithmic}[1]
    \REQUIRE A vector basis $X$, signature $\SignIn$
    \ENSURE $(\SignIn ,\SignOut)$-orthonormal basis $Q$, signature $\SignOut$
		\STATE $S'=X^* \SignIn X$
		\STATE Scale $S=|D|^{-1/2}S'|D|^{-1/2}$ with $D=\text{diag}(S')$
		\STATE Compute eigendecomposition $SV=V\Lambda$
		\STATE $Q=X|D|^{-1/2}V|\Lambda|^{-1/2}$
		\STATE $\SignOut = \text{sign}(\Lambda)$
  \end{algorithmic}
\end{algorithm}

\begin{algorithm}[t]
  \caption{SVQBI-BSE}
  \label{alg:svqbi-bse}
  \begin{algorithmic}[1]
    \REQUIRE Vector basis $\left[\begin{smallmatrix}\XX \\ \XY\end{smallmatrix}\right]$ 
    \ENSURE $(\SignBSE,\SignOutBSE)$-orthonormal basis $\left[\begin{smallmatrix}\QX \\ \QY\end{smallmatrix}\right]$
		\STATE $\SX'=\XX^*\XX-\XY^*\XY$
		\STATE $\SY'=\oXY^*\XX-\oXX^*\XY$
		\STATE Scale $\SX=|D_1|^{-1/2}\SX'|D_1|^{-1/2}$ with $D_1=\text{diag}(\SX')$
		\STATE Scale $\SY=|D_1|^{-1/2}\SY'|D_1|^{-1/2}$ with $D_1=\text{diag}(\SX')$
		\STATE Compute eigendecomposition $\left[\begin{smallmatrix}\SX & -\oSY \\ \SY & -\oSX\end{smallmatrix}\right] \left[\begin{smallmatrix}\VX &\oVY \\ \VY &\oVX\end{smallmatrix}\right] = \left[\begin{smallmatrix}\VX &\oVY\\ \VY&\oVX\end{smallmatrix}\right] \left[\begin{smallmatrix}\Lambda_+ & \\ & -\Lambda_+\end{smallmatrix}\right]$
		\STATE $\QX=\XX|D_1|^{-1/2}\VX\Lambda_+^{-1/2} + \oXY|D_1|^{-1/2}\VY\Lambda_+^{-1/2}$
		\STATE $\QY=\XY|D_1|^{-1/2}\VX\Lambda_+^{-1/2} + \oXX|D_1|^{-1/2}\VY\Lambda_+^{-1/2}$
  \end{algorithmic}
\end{algorithm}

\subsection{Description of the algorithm} \label{sec:algorithm}

We will now address the integration of our polynomial filter within this method.
In this case, the subspace $\mathcal{V}$, with basis $Q$, will be obtained by simultaneous iteration with $q(H)$. 
The Rayleigh-Ritz procedure will then be performed as previously discussed, so that $r\perp\SignBSE\mathcal{V}$, with $r=HQw-\tilde{\lambda}Qw$. This process extracts approximations of the eigenvalues and eigenvectors of $H$ from $\mathcal{V}$ that correspond to those in the wanted subinterval. 
Note that these are approximate eigenvectors of both $H$ and $q(H)$, since the eigenvectors of $H$ are also eigenvectors of $q(H)$~\cite{Higham:2008:FMT}.
The approximate eigenvalues also indicate whether they lie within the specified subinterval, since we do not know, a priori, how many eigenvalues the subinterval contains. In fact, it is possible to underestimate the number of column vectors required to compute all the eigenpairs inside the subinterval.

Next, we will derive the method, emphasizing how the structure of the matrices is preserved throughout the steps, which allows us to compute only half of the blocks of the matrices involved, as the other half is handled implicitly. The resulting process, described by means of \cref{alg:chebfsi-bse-blocks}, consists of the following steps:

\begin{enumerate}
\item Initial subspace: Although a random subspace is a sufficiently good starting subspace for the general subspace iteration method, in our case, the initial subspace must have the structure
$
\left[\begin{smallmatrix}
\XXi & -\oXYi \\
\XYi & -\oXXi
\end{smallmatrix}\right].
$
This is handled implicitly by providing randomly generated blocks $\XXi,\XYi$ as the input to the method.

\item Vector simultaneous iteration: An updated basis is computed as $\pHX=q(H)X$ in line~\ref{algl:clenshaw-bsefsi}. This is done using an adaptation of the Clenshaw algorithm, as will be explained in \cref{sec:clenshaw}. $\pHX$ will have structure 
$
\left[\begin{smallmatrix}
\pHXX & \opHXY \\
\pHXY & \opHXX
\end{smallmatrix}\right].
$

\item Orthonormalization:
A basis of $\mathcal{V}$, $Q$, is obtained as a $(\SignBSE,\SignOutBSE)$-orthonormalized basis spanning the same subspace as $\pHX$, using~\cref{alg:svqbi-bse}, as explained in~\cref{sec:orthogonality}.

\item Rayleigh-Ritz projection: 
A first step computes $\HQ=HQ$, which can be reused later for the computation of the residual. Line~\ref{algl:product-1} of \cref{alg:chebfsi-bse-blocks} performs the product with the blocks of $H$ and $Q$,
\begin{equation}\label{eq:product-updated-basis}
\begin{bmatrix}
\HQX & -\oHQY \\
\HQY & -\oHQX
\end{bmatrix}
=
\begin{bmatrix}
R & C \\
-\overline{C} &-\overline{R}
\end{bmatrix}
\begin{bmatrix}
\QX & \oQY \\
\QY & \oQX
\end{bmatrix}.
\end{equation}

An important difference with the generic subspace iteration method is that, due to the product structure, only half of the resulting matrix needs to be computed. One can also avoid forming the matrices on the right of \eqref{eq:product-updated-basis} by formulating the operation in terms of four block multiplications.
However, computing these block multiplications sequentially, one after the other, can lead to worse performance than computing \eqref{eq:product-updated-basis} directly without taking into account any structure.
One can trade off memory for speed, and compute $\HQX,\HQY$ at the same time from the relation
\[ 
\begin{bmatrix}
\HQX & -\oHQY
\end{bmatrix}
= R
\begin{bmatrix}
\QX & \oQY
\end{bmatrix}
+C
\begin{bmatrix}
\QY & \oQX
\end{bmatrix}.
\]
We will use the same technique when applying the filter in~\cref{sec:clenshaw}, where it will be even more relevant, since that is the most expensive step of the algorithm. If no filter were applied, product \eqref{eq:product-updated-basis} would be the most expensive operation. 

Then, lines~\ref{algl:rr-construct} to~\ref{algl:rr-solve} build and solve the projected eigenproblem~\eqref{eq:petrov-galerkin}, employing the previously computed $\HQ$ when constructing $\SignOutBSE^{-1}M$, as
\begin{equation}\label{eq:rayleigh-ritz-bse}
\SignOutBSE^{-1}M=\SignOutBSE
\begin{bmatrix}
\QX^* & \QY^* \\
\oQY^* & \oQX^*
\end{bmatrix}
\SignBSE
\begin{bmatrix}
\HQX & -\oHQY \\
\HQY & -\oHQX
\end{bmatrix}
=
\begin{bmatrix}
  \MX & \MY \\
  -\oMY & -\oMX
\end{bmatrix}.
\end{equation}
The eigendecomposition of $\SignOutBSE^{-1}M$, 
\begin{equation}
\begin{bmatrix}
  \MX & \MY \\
  -\oMY & -\oMX
\end{bmatrix}
\begin{bmatrix}
\WX & -\oWY \\
\WY & -\oWX
\end{bmatrix}
=
\begin{bmatrix}
\WX & -\oWY \\
\WY & -\oWX
\end{bmatrix}
\begin{bmatrix}
\Lambda_+ &  \\
 & -\Lambda_+ 
\end{bmatrix},
\end{equation}
is computed using the dense method in~\cite[Algorithm~1]{Shao:2016:SPP}, which converts the projected BSE eigenvalue problem into an equivalent real Hamiltonian eigenvalue problem.
Then, the bases $Q$ and $\HQ$ are updated as
\begin{gather}
\begin{bmatrix}
\XX & -\oXY \\
\XY & -\oXX
\end{bmatrix}
=
\begin{bmatrix}
\QX & \oQY \\
\QY & \oQX
\end{bmatrix}
\begin{bmatrix}
\WX & -\oWY \\
\WY & -\oWX
\end{bmatrix}, \\
\begin{bmatrix}
\HXX & \oHXY \\
\HXY & \oHXX
\end{bmatrix}
=
\begin{bmatrix}
\HQX & -\oHQY \\
\HQY & -\oHQX
\end{bmatrix}
\begin{bmatrix}
\WX & -\oWY \\
\WY & -\oWX
\end{bmatrix}.
\end{gather}
\item Residual: The residual $\mathcal{R} = HQW - QW\Lambda$ is obtained using the previously computed $\HX = HQW$ and $X = QW$. This is done in lines~\ref{algl:residual-1} and~\ref{algl:residual-2} as
$$
\begin{bmatrix}
\RX & \oRY \\
\RY & \oRX
\end{bmatrix}
=
\begin{bmatrix}
\HXX & \oHXY \\
\HXY & \oHXX
\end{bmatrix}
-
\begin{bmatrix}
\XX & -\oXY \\
\XY & -\oXX
\end{bmatrix}
\begin{bmatrix}
\Lambda_+ &  \\
 & -\Lambda_+ 
\end{bmatrix}.
$$

\item Convergence check: 
To consider that an eigenvalue in $\Lambda$ has converged, the norm of the corresponding column of the residual must be below a given tolerance. Note that a pair of eigenvalues $(\lambda,-\lambda)$ converge simultaneously, since columns $j$ and $j+k$ of $\mathcal{R}$, $j=1,\dots,k$, have the same norm. Sometimes spurious values can appear, it is easy to identify them by their large residual norms. The strategy we employ to remove them follows the approach proposed in~\cite{Xu:2026:APF}, using a threshold computed in each iteration.
\end{enumerate}

\begin{algorithm}[t]
  \caption{Chebyshev filtered subspace iteration for BSE}
  \label{alg:chebfsi-bse-blocks}
  \begin{algorithmic}[1]
    \REQUIRE A definite $2m\times 2m$ BSE matrix $H$ with structure \eqref{eq:bse-structure} and spectral radius $\lambda_{\text{max}}$; blocks from the initial subspace basis $\XXi,\XYi$; coefficients $c_0,\dots,c_N$; kernel coefficients $g_0,\dots,g_N$
    \ENSURE Selected approximate eigenvalues $\Lambda_+$ and eigenvectors $\left[\begin{smallmatrix}\XX \\ \XY\end{smallmatrix}\right]$
        \STATE Set $\XX=\XXi, \XY=\XYi$
		\WHILE{not converged}
			\STATE $\pHXX,\pHXY = \text{clenshaw\_bse\_fsi}(R,C,\XX,\XY,c,g,\lambda_{\text{max}})$ \label{algl:clenshaw-bsefsi}
			\STATE $\QX,\QY = \text{svqbi\_bse}(\pHXX,\pHXY)$ \label{algl:svqbi-bse}
 			\STATE $ \begin{bmatrix} \HQX & \HQYaux \end{bmatrix} = R \begin{bmatrix} \QX & \oQY \end{bmatrix} +C \begin{bmatrix} \QY & \oQX \end{bmatrix}$ \label{algl:product-1}
 			\STATE $\HQY = -\oHQYaux$
			\STATE $\MX= \QX^*\HQX-\QY^*\HQY$ \label{algl:rr-construct}
			\STATE $\MY= -\QX^*\oHQY+\QY^*\oHQX$
			\STATE $\WX,\WY,\Lambda_+ = \text{dense\_bse}(\MX,\MY)$ \label{algl:rr-solve}
			\STATE $\HXX = \HQX \WX -\oHQY\WY$
			\STATE $\HXY = \HQY \WX-\oHQX\WY$
			\STATE $\XX = \QX \WX -\oQY\WY$
			\STATE $\XY = \QY \WX-\oQX\WY$
			\STATE $\RX = \HXX - \XX \Lambda_+$ \label{algl:residual-1}
			\STATE $\RY = \HXY - \XY \Lambda_+$ \label{algl:residual-2}
			\STATE \text{Check convergence}
		\ENDWHILE
  \end{algorithmic}
\end{algorithm}

\subsection{Locking}
Locking is a commonly used technique to deflate the converged vectors, avoiding unnecessary operations. We perform it in our method in a similar manner as in~\cite{Xu:2026:APF}. We do not include the handling of the locked vectors in~\cref{alg:chebfsi-bse-blocks} for simplicity, but we demonstrate in this subsection how the technique can be adapted to the BSE case.

We split the vector basis $Q$ in its locked and non-locked parts, and permute its columns so that
\[
Q=
\begin{bmatrix}
\LX & \NX & \oLY & \oNY \\
\LY & \NY & \oLX & \oNX 
\end{bmatrix}
=
\begin{bmatrix}
\LX & \oLY & \NX & \oNY \\
\LY & \oLX & \NY & \oNX 
\end{bmatrix}
P=
\begin{bmatrix}
L & N \\
\end{bmatrix}
P,
\]
where $L$ are the $\sizeL$ locked columns, that have converged in previous iterations.
Note that \eqref{eq:biorth} implies that $L^*\SignBSE L=\SignOutBSEL$, with $\SignOutBSEL=\left[\begin{smallmatrix}I_{\sizeL} \\ & -I_{\sizeL}\end{smallmatrix}\right]$, and $N^*\SignBSE L=0$.
Then, the Rayleigh-Ritz projected matrix $M$ can be written as
\[
M = P^*\begin{bmatrix}L & N\end{bmatrix}^*\SignBSE H\begin{bmatrix}L & N\end{bmatrix}P
=
P^*
\begin{bmatrix}
L^*\SignBSE HL & L^*\SignBSE HN \\
N^*\SignBSE HL & N^*\SignBSE HN \\
\end{bmatrix}
P.
\]

In order to apply locking, we assume that the converged columns $L$ verify $HL=L\Lambda_L$, where $\Lambda_L=\left[\begin{smallmatrix}\Lambda_{+L} \\ & -\Lambda_{+L}\end{smallmatrix}\right]$. Since $L^*\SignBSE L=\SignOutBSEL$, we obtain that $L^*\SignBSE HL=\SignOutBSEL \Lambda_L$.

Since $N^*\SignBSE L=0$, we also have that $N^*\SignBSE HL= N^*\SignBSE L\Lambda = 0$ and $(L^*\SignBSE HN)^*=N^*(\SignBSE H)^*L = N^*\SignBSE HL = 0.$

Then, we can substitute in~\eqref{eq:rayleigh-ritz-bse},
\[
\SignOutBSE^{-1}M=
\SignOutBSE P^*
\begin{bmatrix}
\SignOutBSEL \Lambda_L &  \\
       & \tilde{M} \\
\end{bmatrix}
P
=
P^*
\begin{bmatrix}
\Lambda_{+L} &           &      &      \\
          & -\Lambda_{+L} &      &      \\
          &           & \MX  & \MY  \\
          &           & -\oMY & -\oMX \\
\end{bmatrix}
P.
\]
We can exclude the columns in $L$ from the simultaneous iteration, Rayleigh-Ritz projection, and convergence check steps. However, they must be taken into account in the orthonormalization step. In particular, given $\pHX=\begin{bmatrix}L & \hat{N}\end{bmatrix}P$ resulting from line \ref{algl:clenshaw-bsefsi} of \cref{alg:chebfsi-bse-blocks}, a two-phase orthogonalization process is carried out in line \ref{algl:svqbi-bse} to obtain $N$ such that $\text{span}(\begin{bmatrix}L & N\end{bmatrix}) = \text{span}(\begin{bmatrix}L & \hat{N}\end{bmatrix})$ and $Q=\begin{bmatrix}L & N\end{bmatrix}P$ is $(\SignBSE,\SignOutBSE)$-orthogonal.
First, an external phase orthogonalizes $\hat{N}$ with respect to $L$, by computing $\tilde{N} = \hat{N}-L\SignOutBSE L^*\SignBSE \hat{N}$, which can be seen as one step of indefinite block Gram--Schmidt. Then, an internal phase orthogonalizes the columns of $\tilde{N}$ by applying~\cref{alg:svqbi-bse} to them.

Each of the two phases can produce loss of orthogonality in the other, so several iterations may be needed.

\section{Clenshaw for BSE}\label{sec:clenshaw}

The Clenshaw algorithm can be used to compute the product of a truncated Chebyshev series times a vector. In this section, we will consider that the matrices have not been scaled, since the scaling will be done inside the algorithm. 
We will denote the scaled matrix $\tilde{A} \in \mathbb{C}^{m \times m}$ and include the scaling in~\eqref{eq:clenshaw-not-scaled} to obtain
\begin{gather*}
p(\tilde{A})x=
  \frac{1}{\rho}Ay^{(1)} - \frac{\sigma}{\rho}y^{(1)} - y^{(2)} +g_0c_0x,\\
y^{(n)} =
  \frac{2}{\rho}Ay^{(n+1)} - \frac{2\sigma}{\rho}y^{(n+1)}- y^{(n+2)} +g_nc_nx,\\
y^{(N+2)} =y^{(N+1)} =0,
\end{gather*}
with $\rho$ and $\sigma$ from~\eqref{eq:scaling}.

A Bethe--Salpeter version of the algorithm for a scaled matrix $\tilde{H}$ and the structured filter $q$ would have two differences with respect to the general one: (i) the scaling of the matrix is simpler, as in~\eqref{eq:bse-scaling}, allowing for the removal of a sum; and (ii) the term $g_0c_0x$ is zero, as are the terms $g_nc_nx$ for even $n$, according to \cref{lma:coeffs}. This leads to
\begin{gather*}
q(\tilde{H})x=\frac{1}{\lambda_{\text{max}}}Hy^{(1)} - y^{(2)},\\
y^{(n)} = \begin{cases}
\frac{2}{\lambda_{\text{max}}}Hy^{(n+1)} - {y}^{(n+2)} + g_nc_nx, & n=2k+1, \\
\frac{2}{\lambda_{\text{max}}}Hy^{(n+1)} - {y}^{(n+2)},        & n=2k,
\end{cases}\\
y^{(N+2)} =y^{(N+1)} =0.
\end{gather*}

Next, we will see how we can adapt the Clenshaw method to our structure-preserving subspace iteration for the BSE, and to the Lanczos-based method from Shao \textit{et al.}~\cite{Shao:2018:SPL}.

\subsection{Clenshaw for the BSE subspace iteration}

The Clenshaw algorithm can also be applied to a matrix-matrix product with $q(\tilde{H})$. In fact, in line~\ref{algl:clenshaw-bsefsi} of~\cref{alg:chebfsi-bse-blocks} we would apply it to compute the product $q(\tilde{H})X$ as
\begin{gather*}
q(\tilde{H})\begin{bmatrix}\XX & \oXY \\ \XY & \oXX\end{bmatrix}=\frac{1}{\lambda_{\text{max}}}
H\begin{bmatrix}\YnX{1} & \oYnY{1} \\ \YnY{1} & \oYnX{1}\end{bmatrix} 
- \begin{bmatrix}\YnX{2} & -\oYnY{2} \\ \YnY{2} & -\oYnX{2}\end{bmatrix}
=\begin{bmatrix}\pHXX & -\opHXY \\ \pHXY & -\opHXX\end{bmatrix},
\\
Y^{(n)} = \begin{cases}
\frac{2}{\lambda_{\text{max}}}H\begin{bmatrix}\YVX & -\oYVY \\ \YVY & -\oYVX\end{bmatrix}
 - \begin{bmatrix}\YWX & \oYWY \\ \YWY & \oYWX\end{bmatrix} 
+ g_nc_n\begin{bmatrix}\XX & \oXY \\ \XY & \oXX\end{bmatrix}, & n=2k+1, \\
\frac{2}{\lambda_{\text{max}}}H\begin{bmatrix}\YVX & \oYVY \\ \YVY & \oYVX\end{bmatrix} 
- \begin{bmatrix}\YWX & -\oYWY \\ \YWY & -\oYWX\end{bmatrix}, & n=2k,
\end{cases}\\
Y^{(N+2)} =Y^{(N+1)} =0.
\end{gather*}

We can see that the structure is preserved similarly to~\eqref{eq:product-updated-basis}. This leads to~\cref{alg:clenshaw-chebfsi}, where the Clenshaw method is only applied to $\left[\begin{smallmatrix}\XX \\ \XY\end{smallmatrix}\right]$, and the right half of $\pHX$ is obtained implicitly. Products with $H$ (Line \ref{algl:clenshaw-chebfsi-Hprod}) can be decomposed into smaller products with the matrix blocks, although doing so can lead to poor performance, as was the case with the product with $H$ in~\eqref{eq:product-updated-basis}.

\begin{algorithm}[t]
  \caption{Clenshaw for the subspace iteration BSE}
  \label{alg:clenshaw-chebfsi}
  \begin{algorithmic}[1]
    \REQUIRE A definite $2m\times 2m$ BSE matrix $H$ with structure \eqref{eq:bse-structure} and spectrum in range $\{-\lambda_{\text{max}},\lambda_{\text{max}}\}$; matrices $\XX$ and $\XY$; polynomial degree $N$, polynomial coefficients $c_0,\dots,c_N$ and damping  coefficients $g_0,\dots,g_N$
    \ENSURE Matrices $\pHXX$ and $\pHXY$ such that $\left[\begin{smallmatrix}\pHXX\\\pHXY\end{smallmatrix}\right]=q(\tilde{H})\left[\begin{smallmatrix}\XX\\\XY\end{smallmatrix}\right]$
		\STATE $\begin{bmatrix}Y_1^{(N+1)} & Y_2^{(N+1)}\end{bmatrix} = \begin{bmatrix}0 & 0\end{bmatrix}$
		\STATE $\begin{bmatrix}Y_1^{(N)} & Y_2^{(N)}\end{bmatrix} = g_{N}c_{N}\begin{bmatrix}\XX & \XY\end{bmatrix}$
		\FOR{$n=N-1,\dots,0$}
			\STATE Let $s=\left\lbrace\begin{aligned}
				&2/\lambda_{\text{max}},\quad \text{ if } n>0\\
				&1/\lambda_{\text{max}},\quad \text{ if } n=0
			\end{aligned}\right.$
      \STATE $\begin{bmatrix}\YYX & \YYYaux\end{bmatrix} = R\begin{bmatrix}\YVX & \oYVY\end{bmatrix} + C\begin{bmatrix}\YVY & \oYVX\end{bmatrix}$\label{algl:clenshaw-chebfsi-Hprod}
      \STATE $\YYY \leftarrow -\oYYYaux$
      \STATE $\begin{bmatrix}\YYX & \YYY\end{bmatrix} \leftarrow s\begin{bmatrix}\YYX & \YYY\end{bmatrix} - \begin{bmatrix}\YWX & \YWY\end{bmatrix}$
      \IF {$n$ is odd}
        \STATE $\begin{bmatrix}\YYX & \YYY\end{bmatrix} \leftarrow  \begin{bmatrix}\YYX & \YYY\end{bmatrix}+ g_{n}c_{n}\begin{bmatrix}\XX & \XY\end{bmatrix}$
			\ENDIF
		\ENDFOR
      \STATE Set $\pHXX=\YZX, \pHXY=\YZY$
  \end{algorithmic}
\end{algorithm}

\subsection{Clenshaw for the BSE Lanczos method by Shao \textit{et al}}

As mentioned in \cref{sec:bse}, some BSE structure-preserving methods, e.g., \cite{Shao:2018:SPL}, work with vectors with the form $[u^T, \varsigma\bar{u}^T]^T$, with a sign $\varsigma=\pm 1$, which makes it possible to
operate with only the upper part of the vectors, using the property \cref{eq:H-times-u-conj-u}.

The Clenshaw algorithm can be adapted to this case, so that operation with the lower part of the vector is implicit. To do so, the algorithm needs to keep track of the lower part sign $\varsigma$ throughout the series of consecutive matrix-vector products.
In particular, given a vector $u$, the corresponding vector $v$ resulting from
\[
q(H)\begin{bmatrix}u \\ \varsigma\overline{u}\end{bmatrix}=\begin{bmatrix}v \\ -\varsigma\overline{v}\end{bmatrix},
\]
is computed by means of the recurrence
\begin{gather*}
v=\frac{1}{\lambda_{\text{max}}}(Ry^{(1)}+\varsigma_1 C\overline{y}^{(1)}) - y^{(2)},\\
y^{(n)} = 
\frac{2}{\lambda_{\text{max}}}Ry^{(n+1)}+\varsigma_{n+1}C\overline{y}^{(n+1)} - {y}^{(n+2)} + c_n u,\quad \varsigma_{n}=-\varsigma_{n+1},\qquad 1\le n\le N\\
y^{(N+2)} =y^{(N+1)} =0,\quad \varsigma_{N+1}=\varsigma.
\end{gather*}

This transformation is expected to have a positive impact on performance and is required in order to use the algorithm in combination with some structure-preserving iterative eigensolvers.

\section{Computational results}\label{sec:results}

The objective of this section is to validate the methods described in this article and present some computational results. For that purpose we will use two matrices from real applications. The first one is obtained from the Yambo code and corresponds to a simulation of a two-dimensional crystalline material (chromium triiodide). The second one comes from the study of the ground-state and excited-state calculations for a single strand DNA helix fragment with the TD-DFT method. The latter is interesting for us since it is possible to sparsify this matrix, i.e., drop matrix entries whose absolute value is below a given threshold. Even though the resulting matrix has a perturbed spectrum, in this case the obtained optical absorption spectrum matches reasonably well in the range of interest for the application. Additional details from these matrices are described in~\cref{tab:testproblems}. \cref{fig:spectrum} shows the distribution of the first 100 eigenvalues.

\begin{table}%
\caption{Description of the test problems used in the computational experiments: type of problem (sparse or dense), dimension $m$ of the $R$ and $C$ blocks, percentage of non-zero elements (nnz), largest eigenvalue $\lambda_{\text{max}}$, smallest positive eigenvalue $\lambda_1$, and average separation of the first 100 positive eigenvalues (sep).}
\label{tab:testproblems}
\begin{minipage}{\columnwidth}
\begin{center}
\begin{tabular}{llccccc}
\hline
name & type & $m$ & nnz & $\lambda_{\text{max}}$ & $\lambda_1$ & sep\\
\hline
\cri & dense  & 11520 & 100\% & 0.37646208 & $0.0518094226$ & $1.6\cdot 10^{-4}$\\
\dna & sparse & 12000 & 3.2\% & 1.46944408 & $0.1240817806$ & $1.2\cdot 10^{-3}$\\
\hline
\end{tabular}
\end{center}
\end{minipage}
\end{table}%

\begin{figure}[ht]
  \begin{center}
    \begin{tikzpicture}[xscale=0.9]
      \draw (0,0) -- (12.4,0);
      \foreach \x in \xallCrI \draw[gray,thin] (\x,-.25) -- +(0,.5);
      \draw[gray,thin] (0,-.25) -- +(0,.25);
      \node[below] at (0,-.4) {$5\cdot 10^{-2}$};
      \draw[thick,blue] (0,.6) -- ++(1.8093308239718,0)
                      foreach \x in \xincCrI { -| ++(\x,0.038462) }
                      -- ++(.1,0);
    \end{tikzpicture}
    \begin{tikzpicture}[xscale=0.8]
      \draw (0,0) -- (14,0);
      \foreach \x in \xallDNA \draw[gray,thin] (\x,-.25) -- +(0,.5);
      \draw[gray,thin] (0,-.25) -- +(0,.25);
      \node[below] at (0,-.4) {$1\cdot 10^{-1}$};
      \draw[thick,blue] (0,.6) -- ++(2.4081780613503,0)
                      foreach \x in \xincDNA { -| ++(\x,0.038462) }
                      -- ++(.1,0);
    \end{tikzpicture}
  \end{center}
  \caption{\label{fig:spectrum}Distribution of the first 100 positive eigenvalues of the \cri and \dna matrices, respectively. The blue line increments its height by a fixed amount when an eigenvalue is detected going from left to right on the distribution graph, to showcase the presence of eigenvalue clusters or multiple eigenvalues.}
\end{figure}

The experiments were carried out in MATLAB R2024a, using an AMD EPYC GENOA 9354 processor with two sockets with 32 physical cores each and hyperthreading disabled (64 threads total), at 3.8 GHz maximum frequency, and 384 GB of main memory (DDR5 at 4800 MHz).
The methods described are sequential but the MATLAB internal operations utilize the 64 available threads.

When using the polynomial filter, we are computing $\lambda_{\text{max}}$ using MATLAB's function \texttt{eigs}. The time for this computation is not included in the results. For the damping coefficients we use the Jackson kernel. We did not perform reorthogonalization in the SVQBI-BSE method in any of our experiments, and when employing the filter we always enabled locking and performed only one outer and one inner phase in the orthogonalization. The tolerance for the residual was $10^{-8}$ in all cases.

\subsection{Structure-preserving subspace iteration for BSE}
As a first experiment, we compare our structure-preserving BSE subspace iteration method (SI-BSE, \cref{alg:chebfsi-bse-blocks} without applying the filter) with the standard subspace iteration algorithm (SI, \cref{alg:subspace-iteration}) and analyze the performance gain.
In this experiment we do not employ any filtering, and therefore we are computing the largest eigenvalues. We compute 20 eigenpairs, for a range of initial subspace bases with varying number of column vectors (\textsf{ncv}), and compare both iterations and time. When using the structure-preserving method, we compute half of the eigenpairs (10), since we obtain the negative eigenpairs from the positive ones. In fact,~\cref{fig:si-vs-sibse} shows that the number of iterations is similar for the SI-BSE and SI with double number of column vectors. In practice, the gain in performance comes from avoiding the explicit computation of the negative part of the spectrum.

\begin{figure}
\label{fig:si-vs-sibse}
\centering
\begin{tikzpicture}[scale=0.7]
  \begin{axis}[
  title={\textsf{\cri}},
  ylabel={Iterations},
  xlabel={\textsf{ncv}},
  grid=major,
  ticklabel style={font=\small},
  legend style={draw=none, legend columns=1,font=\small,cells={anchor=west}}
  ]
  \addplot coordinates { 
    (30, 952 )
    (40, 802 )
    (50, 693 )
    (60, 606 )
    (70, 562 )
    (80, 466 )
  };
  \addplot coordinates { 
    (40, 1706)
    (50, 1351)
    (60, 971 )
    (70, 826 )
    (80, 765 )
    (90, 718 )
    (100,676 )
    (110,638 )
    (120,575 )
    (130,543 )
    (140,537 )
    (150,504 )
    (160,464 )
  };
  \legend{SI-BSE, SI}
  \end{axis}
\end{tikzpicture}
\begin{tikzpicture}[scale=0.7]
  \begin{axis}[
  title={\textsf{\cri}},
  ylabel={Time [s]},
  xlabel={\textsf{ncv}},
  grid=major,
  ticklabel style={font=\small},
  legend pos=north west,
  legend style={draw=none, legend columns=1,font=\small,cells={anchor=west}}
  ]
  \addplot coordinates { 
    (20, 164.9842)
    (30, 127.9034)
    (40, 136.4578)
    (50, 143.9874)
    (60, 147.603 )
    (70, 160.8682)
    (80, 157.5644)
  };
  \addplot coordinates { 
    (30, 233.2898)
    (40, 199.3322)
    (50, 184.0177)
    (60, 162.4767)
    (70, 168.626 )
    (80, 190.3679)
    (90, 190.0982)
    (100,228.426 )
    (110,242.5406)
    (120,234.0726)
    (130,234.6902)
    (140,243.5017)
    (150,273.4499)
    (160,267.2556)
  };
  \legend{SI-BSE, SI}
  \end{axis}
\end{tikzpicture}
\caption{The generic subspace iteration algorithm (SI) and our BSE structure-preserving version (SI-BSE) compared on iterations and time for computing the 20 largest eigenvalues in magnitude and their corresponding eigenvectors. The SI-BSE method computes only 10, the positive ones.}
\end{figure}

\subsection{Structure-preserving filter and parameters}
Our next experiment is a study of the convergence and performance of the filtered structure-preserving subspace iteration for BSE with varying polynomial degree ($N$) and number of column vectors (\textsf{ncv}). \cref{fig:filter-parameters-locking} shows the number of iterations and time for the two matrices employed, showcasing the compromise between reducing the number of iterations and performing products with larger matrices.

For the \cri matrix, 98.6\%  to 99.7\% of the time corresponded to applying the Clenshaw method, and inside it, 88.9\% to 92.1\% corresponded to the products with the blocks of $H$.
For the \dna matrix, those percentages were 98.2\%  to 99.7\% and 82.8\% to 91.8\% respectively. The cost of the product operations dominates the total cost. The number of products with $H$ performed (in blocks) is of the order of the degree times the number of iterations. Since those products have a high arithmetic intensity, the objective of a high performance implementation would be to optimize that step for GPU.

\begin{figure}
\label{fig:filter-parameters-locking}
\centering
\begin{tikzpicture}[scale=0.7]
  \begin{axis}[
  title={\textsf{\cri}},
  ylabel={Iterations},
  xlabel={\textsf{ncv}},
  grid=major,
  ticklabel style={font=\small},
  legend style={draw=none, legend columns=1,font=\small,cells={anchor=west}}
  ]
  \addplot coordinates { 
    (30,66)
    (40,43)
    (50,38)
    (60,31)
    (70,28)
    (80,26)
  };
  \addplot coordinates { 
    (30,29)
    (40,18)
    (50,16)
    (60,13)
    (70,12)
    (80,11)
  };
  \addplot coordinates { 
    (30,18)
    (40,10)
    (50,9 )
    (60,7 )
    (70,6 )
    (80,6 )
  };
  \addplot coordinates { 
    (30,13)
    (40,7 )
    (50,6 )
    (60,4 )
    (70,4 )
    (80,4 )
  };
  \addplot coordinates { 
    (30,9)
    (40,5 )
    (50,4 )
    (60,4 )
    (70,4 )
    (80,4 )
  };
  \legend{100,200,300,400,500}
  \end{axis}
\end{tikzpicture}
\begin{tikzpicture}[scale=0.7]
  \begin{axis}[
  title={\textsf{\cri}},
  ylabel={Time [s]},
  xlabel={\textsf{ncv}},
  grid=major,
  ticklabel style={font=\small},
  legend style={draw=none, legend columns=1,font=\small,cells={anchor=west}}
  ]
  \addplot coordinates { 
    (30,683.5891)
    (40,538.3689)
    (50,600.0319)
    (60,606.4333)
    (70,613.4888)
    (80,644.1393)
  };
  \addplot coordinates { 
    (30,607.7493)
    (40,431.9648)
    (50,477.1397)
    (60,478.4139)
    (70,475.5668)
    (80,526.7354)
  };
  \addplot coordinates { 
    (30,556.7192)
    (40,365.8786)
    (50,393.6963)
    (60,349.4309)
    (70,389.8217)
    (80,419.6537)
  };
  \addplot coordinates { 
    (30,533.8952)
    (40,335.9558)
    (50,361.4665)
    (60,276.7784)
    (70,306.5563)
    (80,371.6866)
  };
  \addplot coordinates { 
    (30,468.5206)
    (40,312.171 )
    (50,297.8307)
    (60,339.436 )
    (70,423.7584)
    (80,462.8062)
  };
  \legend{100, 200, 300, 400, 500}
  \end{axis}
\end{tikzpicture}
\begin{tikzpicture}[scale=0.7]
  \begin{axis}[
  title={\textsf{\dna}},
  ylabel={Iterations},
  xlabel={\textsf{ncv}},
  grid=major,
  ticklabel style={font=\small},
  legend style={draw=none, legend columns=1,font=\small,cells={anchor=west}}
  ]
  \addplot coordinates { 
    (30,33)
    (40,21)
    (50,16)
    (60,14)
    (70,12)
    (80,10)
  };
  \addplot coordinates { 
    (30,14)
    (40,8 )
    (50,6 )
    (60,5 )
    (70,4 )
    (80,4 )
  };
  \addplot coordinates { 
    (30,8 )
    (40,5 )
    (50,4 )
    (60,4 )
    (70,4 )
    (80,4 )
  };
  \addplot coordinates { 
    (30,5 )
    (40,4 )
    (50,4 )
    (60,3 )
    (70,3 )
    (80,3 )
  };
  \addplot coordinates { 
    (30,4 )
    (40,3 )
    (50,3 )
    (60,3 )
    (70,3 )
    (80,3 )
  };
  \legend{100,200,300,400,500}
  \end{axis}
\end{tikzpicture}
\begin{tikzpicture}[scale=0.7]
  \begin{axis}[
  title={\textsf{\dna}},
  ylabel={Time [s]},
  xlabel={\textsf{ncv}},
  grid=major,
  ticklabel style={font=\small},
  legend style={draw=none, legend columns=1,font=\small,cells={anchor=west}}
  ]
  \addplot coordinates { 
    (30,329.5281)
    (40,215.8216)
    (50,197.8059)
    (60,181.2502)
    (70,170.9827)
    (80,147.4999)
  };
  \addplot coordinates { 
    (30,253.8652)
    (40,155.7141)
    (50,132.2677)
    (60,119.5409)
    (70,97.4024 )
    (80,107.9262)
  };
  \addplot coordinates { 
    (30,214.9272)
    (40,143.1439)
    (50,127.011 )
    (60,145.234 )
    (70,155.2124)
    (80,167.9822)
  };
  \addplot coordinates { 
    (30,181.8747)
    (40,150.075 )
    (50,163.0252)
    (60,131.5979)
    (70,140.8711)
    (80,156.5051)
  };
  \addplot coordinates { 
    (30,169.0475)
    (40,134.6453)
    (50,151.8079)
    (60,166.0114)
    (70,176.3179)
    (80,196.161 )
  };
  \legend{100, 200, 300, 400, 500}
  \end{axis}
\end{tikzpicture}
\caption{The filtered structure-preserving subspace iteration compared on iterations and time for different degree $N$ and number of column vectors.}
\end{figure}

\subsection{Structure-preserving filter for computing different subintervals} \label{sec:results-intervals}

\begin{table}
\caption{Description of the subintervals employed in~\cref{sec:results-intervals}, given by the indexes of the eigenvalues contained (eigs), the endpoints of the subinterval $\alpha$ and $\beta$ (rounded to 6 decimal digits), and the ratio between the size of the subinterval and the size of the positive part of the spectrum.}
\label{tab:intervals}
\begin{minipage}{\columnwidth}
\begin{center}
\begin{tabular}{l|ccc|ccc}
\cline{2-7}
& \multicolumn{3}{c|}{\cri} & \multicolumn{3}{c}{\dna} \\
\hline
eigs & $\alpha$ & $\beta$ & $(\beta-\alpha)/\lambda_{\text{max}}$ & $\alpha$ & $\beta$ & $(\beta-\alpha)/\lambda_{\text{max}}$\\
\hline
1-10   & 0        & 0.053396 & $1.42\cdot10^{-1}$ & 0        & 0.163920 & $1.12\cdot10^{-1}$ \\
11-20  & 0.053396 & 0.054153 & $2.01\cdot10^{-3}$ & 0.163920 & 0.179524 & $1.06\cdot10^{-2}$ \\
21-30  & 0.054153 & 0.056641 & $6.61\cdot10^{-3}$ & 0.179524 & 0.189453 & $6.76\cdot10^{-3}$ \\
31-40  & 0.056641 & 0.058592 & $5.18\cdot10^{-3}$ & 0.189453 & 0.201889 & $8.46\cdot10^{-3}$ \\
41-50  & 0.058592 & 0.059086 & $1.31\cdot10^{-3}$ & 0.201889 & 0.209326 & $5.06\cdot10^{-3}$ \\
51-60  & 0.059086 & 0.060611 & $4.05\cdot10^{-3}$ & 0.209326 & 0.214849 & $3.76\cdot10^{-3}$ \\
61-70  & 0.060611 & 0.060638 & $7.17\cdot10^{-5}$ & 0.214849 & 0.221373 & $4.44\cdot10^{-3}$ \\
71-80  & 0.060638 & 0.061067 & $1.14\cdot10^{-3}$ & 0.221373 & 0.228001 & $4.51\cdot10^{-3}$ \\
81-90  & 0.061067 & 0.061681 & $1.63\cdot10^{-3}$ & 0.228001 & 0.233367 & $3.65\cdot10^{-3}$ \\
91-100 & 0.061681 & 0.062225 & $1.45\cdot10^{-3}$ & 0.233367 & 0.238618 & $3.57\cdot10^{-3}$ \\
\hline
\end{tabular}
\end{center}
\end{minipage}
\end{table}

\pgfplotsset{
  width= 400,
  height= 200
}

\begin{figure}
\label{fig:intervals}
\centering
\begin{tikzpicture}[scale=0.7]
  \begin{axis}[
  title={Intervals},
  ylabel={Iterations},
  xlabel={Intervals},
  grid=major,
  xtick={10,20,30,40,50,60,70,80,90,100},
  xticklabels={1-10,11-20,21-30,31-40,41-50,51-60,61-70,71-80,81-90,91-100},
  legend pos=north west,
  legend style={draw=none, legend columns=1,font=\small,cells={anchor=west}}
  ]
  \addplot coordinates { 
    (10,  3)
    (20,  4)
    (30,  6)
    (40,  6)
    (50,  7)
    (60,  7)
    (70,  7)
    (80,  6)
    (90,  6)
    (100, 6)
  };
  \addplot coordinates { 
    (10,  4)
    (20,  7)
    (30,  20)
    (40,  23)
    (50,  51)
    (60,  173)
    (70,  82)
    (80,  87)
    (90,  72)
    (100, 129)
  };
  \addplot coordinates { 
    (10,  3 )
    (20,  4 )
    (30,  4 )
    (40,  7 )
    (50,  9 )
    (60,  10)
    (70,  14)
    (80,  18)
    (90,  18)
    (100, 22)
  };
  \legend{uniform, \cri, \dna}
  \end{axis}
\end{tikzpicture}
\caption{Filtered structure-preserving subspace iteration applied to subsequent subintervals, compared on the number of iterations for three matrices with different distribution of eigenvalues.}
\end{figure}

Next, we compute subsequent subintervals of ten eigenpairs for both matrices \cri and \dna, selecting the endpoints as the middle value between the largest eigenvalue contained in the previous subinterval and the smallest inside the current one (for the first subinterval we select $\alpha=0$). The degree was set to 500 and the number of column vectors to 50 in all cases. \cref{tab:intervals} shows the distribution of the subintervals and the ratio between the size of the positive subinterval and the size of the positive half of the spectrum. A narrower subinterval could require using a higher degree of the polynomial~\cite{Fang:2012:FLP}. However, in this test we have used a fixed degree for all the subintervals.

All subintervals were computed successfully, obtaining the correct number of eigenvalues, residuals below the tolerance, and $(\SignBSE,\SignOutBSE)$-orthogonality of the computed eigenvectors and biorthogonality between the left and right eigenvectors below $1\cdot10^{-13}$ in all cases.
However, the convergence is significantly worse for some subintervals, as shown in~\cref{fig:intervals}, especially in the case of the \cri matrix. We believe that the separation of the eigenvalues being reduced for subintervals further away from 0, and the appearance of clusters, that can be seen in~\cref{fig:spectrum}, has an impact on the convergence. We include, for comparison, the same test on an artificial BSE matrix with $R$ and $C$ diagonal, and evenly distributed eigenvalues. To create that matrix we have used $\lambda_{\text{max}}=2$, $\lambda_1=1$ and a separation of 0.001 between all eigenvalues. In all tests we are using the same random initial subspace for all the subintervals.

The main challenge to develop a spectrum slicing method would be selecting the subintervals in a way that improves the convergence and distributes the computational load evenly, as well as improving the properties of the filter to address these difficult cases.

\section{Concluding remarks}\label{sec:concl}

The main contribution of this article is a structure-preserving filter that, when applied to a definite BSE matrix, yields another matrix with the same properties. The two key ideas behind it are: a symmetry in the filter that results in the even coefficients being zero and employing a kernel with a positivity property, such as the Fejer or Jackson kernel.

This filter, together with the structure-preserving subspace iteration method described in this article, makes it possible to compute all the eigenvalues contained in a subinterval within the spectrum of a BSE matrix, from the endpoints of the subinterval, assuming that the number of column vectors is large enough. Maintaining the structure improves computational efficiency and saves memory, by computing explicitly only the positive eigenpairs, since the negative eigenvalues and their corresponding eigenvectors can be obtained from the former.

 The filter described here opens the door to a structure-preserving spectrum slicing method for the BSE. This is the main advantage from other methods~\cite{Napoli:2026:CAS}, that can obtain the smallest eigenvalues in magnitude, but not those in any subinterval.

Our experiments also show that the convergence is related to the degree of the filter, the size of the search space and the distribution of the spectrum. 
The next step will be to combine the work presented in this article with a method to obtain information about the distribution of the spectrum, in order to select the subintervals in a way that improves the convergence and balances the load of the computation of all the subintervals.

Lastly, while the filter was developed for the BSE application, any matrix that shows the same properties as those described for $H$ could use the proposed methods.

\begin{acknowledgements}
We would like to thank Davide Sangalli for providing us with the \yambo test matrix, and Luca Sementa for providing us with the single strand DNA helix matrix, both used in~\cref{sec:results}. We would like to thank Cl{\'e}ment Richefort for his feedback and useful comments.
\end{acknowledgements}

\appendix
\section{Inverse of a BSE matrix $H$} \label{sec:inverse-H}
\begin{lemma}\label{lma:invert}
Let $H \in \mathbb{C}^{2m\times 2m}$ be an invertible matrix with structure~\eqref{eq:bse-structure}, then $H^{-1}$ preserves the same structure.
\end{lemma}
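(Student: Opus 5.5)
Rather than invert blockwise, I would characterize the BSE structure~\eqref{eq:bse-structure} by two algebraic identities that are visibly stable under inversion. Write $J=\left[\begin{smallmatrix} & I\\ -I & \end{smallmatrix}\right]$ as in \cref{sec:bse}. The plan has three steps: establish the characterization, check that each identity passes to $H^{-1}$, and read the structure back off.

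\textbf{Characterization.} Let $Z=\left[\begin{smallmatrix}0 & I\\ I & 0\end{smallmatrix}\right]$. For a general block matrix $H=\left[\begin{smallmatrix}A & B\\ D & E\end{smallmatrix}\right]$ I would verify two equivalences by direct block multiplication:
\begin{itemize}
\item[(a)] $(\SignBSE H)^*=\SignBSE H$ (pseudo-Hermitian) holds iff $A=A^*$, $E=E^*$ and $D=-B^*$;
\item[(b)] $(JH)^T=JH$ (J-symmetric) holds iff $B=B^T$, $D=D^T$ and $E=-A^T$.
\end{itemize}
Together, (a) and (b) give exactly $R=A=R^*$, $C=B=C^T$, $D=-C^*$ and $E=-R^T$, which is structure~\eqref{eq:bse-structure}. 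Conversely, any BSE matrix satisfies both (a) and (b). Hence a matrix has BSE structure if and only if it is simultaneously $\SignBSE$-pseudo-Hermitian and J-symmetric.

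\textbf{Transfer to the inverse.} Write (a) as $H^*=\SignBSE H\SignBSE$, using $\SignBSE^{-1}=\SignBSE=\SignBSE^*$. Inverting both sides gives $(H^{-1})^*=\SignBSE H^{-1}\SignBSE$, which is (a) for $H^{-1}$. Similarly, (b) reads $H^TJ^T=JH$. Since $J^T=-J$ and $J^{-1}=-J$, this is equivalent to $H^T=JHJ^{-1}$. Inverting gives $(H^{-1})^T=JH^{-1}J^{-1}$, which is (b) for $H^{-1}$. By the characterization, $H^{-1}$ therefore has BSE structure.

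\textbf{Main obstacle.} The only real work is checking the two block computations in (a) and (b) carefully. In particular, the sign conventions of $J$ must produce $E=-A^T$ rather than $E=A^T$. The inversion step is routine.

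A fallback is also available. Take $D=-C^*$ and $E=-R^T$, and use the Schur complement formula with $S=-R^T+C^*R^{-1}C$. This shows $S^T=-R+C^TR^{-T}C^{*T}$, which conjugates appropriately. However, this route requires $R$ to be invertible, so I prefer the identity-based argument, which avoids that assumption.
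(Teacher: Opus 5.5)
Your route differs from the paper's and works, but the rewriting of (b) has a sign error that you need to fix.

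\paragraph{The sign slip.} From $(JH)^T=JH$ you get $H^TJ^T=JH$, i.e.\ $-H^TJ=JH$. Hence $H^T=-JHJ^{-1}$ (equivalently $H^T=JHJ$), not $H^T=JHJ^{-1}$. The identity you wrote says that $JH$ is skew-symmetric, and it forces $E=A^T$ with $B$ and $D$ skew-symmetric. It already fails for the BSE matrix $\mathrm{diag}(1,-1)$ with $m=1$. So the sign trap you flagged for the block computation reappears in the step you called routine.

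The repair is immediate. Inverting $H^T=-JHJ^{-1}$ gives $(H^{-1})^T=-JH^{-1}J^{-1}$, which is (b) for $H^{-1}$. Alternatively, argue directly: if $S=JH$ is symmetric, then $JH^{-1}=JS^{-1}J$ and $(JS^{-1}J)^T=J^TS^{-1}J^T=JS^{-1}J$. Your characterization via (a) and (b), and your transfer of (a), are correct as written.

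\paragraph{Comparison with the paper.} The paper writes $H^{-1}=\hat H^{-1}\Sigma_m$. Hermiticity of $\hat H^{-1}$ then gives the pseudo-Hermitian half of the structure, which is your (a) in disguise. It obtains the remaining relations $B_1=\overline{B}_3$ and $B_2=B_2^T$ by conjugating the block equations of $HH^{-1}=I$. This shows $H\left[\begin{smallmatrix}B_1-\overline{B}_3\\ B_2^*-\overline{B}_2\end{smallmatrix}\right]=0$, and invertibility of $H$ is used a second time.

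Your argument replaces this block bookkeeping with one observation. BSE structure is the intersection of two classes, each defined by a relation $H^\star=\pm KHK^{-1}$ with $\star\in\{*,T\}$ and $K$ invertible, and any such relation passes automatically to $H^{-1}$.
\begin{itemize}
\item Your route is shorter and explains why the lemma holds. It also shows separately that inverses of $\Sigma_m$-pseudo-Hermitian matrices and of J-symmetric matrices stay in their respective classes.
\item The paper's route is more elementary and stays in the block notation used throughout the paper.
\end{itemize}
Your Schur-complement fallback is unnecessary, and as you note it would require $R$ to be invertible.
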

\begin{proof}
The inverse of $H$ is
$
H^{-1}=(\SignBSE\hat{H})^{-1}=\hat{H}^{-1}\SignBSE.
$
Since $\hat{H}$ is Hermitian, $\hat{H}^{-1}$ is Hermitian as well, 
$$
\hat{H}^{-1}=
\begin{bmatrix}
B_1  &B_2\\
B_2^*&B_3
\end{bmatrix},
\qquad
H^{-1}=\begin{bmatrix}
B_1  &-B_2\\
B_2^*&-B_3
\end{bmatrix},
$$
with $B_i\in \mathbb{C}^{m\times m}$, $i=1,2,3$, and $B_1=B_1^*, B_3=B_3^*$.

To have BSE structure, $H^{-1}$ must also satisfy $B_2^T=B_2, \mybar{B}_3=B_1$.
Using the definition of inverse matrix,
\[
\begin{bmatrix}
R&C\\
-\mybar{C}&-\mybar{R}
\end{bmatrix}
\begin{bmatrix}
B_1&-B_2\\
B_2^*&-B_3
\end{bmatrix}
=
\begin{bmatrix}
I&\\
&I
\end{bmatrix}.
\]
Thus,
\[
\begin{gathered}
\begin{bmatrix}
R&C
\end{bmatrix}
\begin{bmatrix}
B_1\\
B_2^*
\end{bmatrix}
=
\begin{bmatrix}
R&C
\end{bmatrix}
\begin{bmatrix}
\mybar{B}_3\\
\mybar{B}_2
\end{bmatrix}
=
I
,\\
\begin{bmatrix}
-\mybar{C}&-\mybar{R}
\end{bmatrix}
\begin{bmatrix}
B_1\\
B_2^*
\end{bmatrix}
=
\begin{bmatrix}
-\mybar{C}&-\mybar{R}
\end{bmatrix}
\begin{bmatrix}
\mybar{B}_3\\
\mybar{B}_2
\end{bmatrix}
=
0.
\end{gathered}
\]
Therefore,
\[
\begin{bmatrix}
R&C\\
-\mybar{C}&-\mybar{R}
\end{bmatrix}
\begin{bmatrix}
B_1-\mybar{B}_3\\
B_2^*-\mybar{B}_2
\end{bmatrix}
=
0.
\]
Since $H$ is invertible, it follows that $B_1=\mybar{B}_3$, $B_2=B_2^T$.
Therefore $H^{-1}$ has BSE structure.
\end{proof}

\end{document}